\newtheorem{theorem}{\bf Theorem}
\newtheorem{lemma}[theorem]{\bf Lemma}
\newtheorem{proposition}[theorem]{\bf Proposition}
\newtheorem{definition}[theorem]{\bf Definition}
\newtheorem{remark}[theorem]{\bf Remark}
\numberwithin{equation}{section}
\numberwithin{theorem}{section}
\numberwithin{figure}{section}
\def\R{\mathbb{R}}
\def\R{\mathbb{R}}
\begin{document}
\renewcommand{\thefootnote}{}
\footnotetext{Research partially supported by Ministerio de Educaci\'on Grants No: MTM2013-43970-P  and partially supported by the Coordena\c{c}\~ao de Aperfei\c{c}oamento de Pessoal de N\'ivel Superior - Brasil (CAPES) - Finance Code 001} 

\title{Uniqueness of the $[\varphi,\vec{e}_{3}]$-catenary cylinders by their asymptotic behaviour} 
\author{$\text{A.L. Mart\'inez-Trivi\~no}^{1} \text{ and J.P. dos Santos}^{2}$}
\vspace{.1in}
\maketitle

{
\noindent $^1$Departamento de Geometr\'\i a y Topolog\'\i a, Universidad de Granada, E-18071, Granada, Spain\\ 
\noindent $^2$Departamento de Matem\'atica, Universidade de Bras\'\i lia, Bras\'\i lia-DF, Brazil \vspace*{2mm} \\ 
e-mails: aluismartinez@ugr.es, joaopsantos@unb.br}

\begin{abstract}
 We establish a uniqueness result for the $[\varphi,\vec{e}_{3}]$-catenary cylinders by their asymptotic behaviour. Well known examples of such cylinders are the grim reaper translating solitons for the mean curvature flow. For such solitons, F. Mart\'in, J. P\'erez-Garc\'ia, A. Savas-Halilaj and K. Smoczyk proved that, if $\Sigma$ is a properly embedded translating soliton with locally bounded genus, and $\mathcal{C}^{\infty}$-asymptotic to two vertical planes outside a cylinder, then $\Sigma$ must coincide with some grim reaper translating soliton. In this paper, applying the moving plane method of Alexandrov together with a strong maximum principle for elliptic operators, we increase the family of $[\varphi,\vec{e}_{3}]$-minimal graphs where these types of results hold under different assumption of asymptotic behaviour.
\end{abstract}
\vspace{0.2 cm}

\noindent 2020 {\it  Mathematics Subject Classification}: {53C42, 35J60 }

\noindent {\it Keywords: }$\varphi$-minimal surface, $\varphi$-catenary cylinder, uniqueness, asymptotic behaviour.
\everymath={\displaystyle}

\section{Introduction}

Let $\varphi: \Omega \subset\mathbb{R}^3 \rightarrow\mathbb{R}$ be a smooth function and $\Omega$ an open subset of $\mathbb{R}^{3}$. An orientable immersion $\Sigma$  in $\Omega$ is called $\varphi$-minimal if and only if the mean curvature $H$ verifies the following equation
 \begin{equation}
\label{defphimin-original}
H=-\langle\overline{\nabla}\varphi, N\rangle,
\end{equation}
where $N$ is the Gauss map, $\overline{\nabla}$ denotes the usual gradient in $\mathbb{R}^{3}$ and $\langle\cdot,\cdot\rangle$ stands the usual Euclidean metric. From the work of T.Ilmanen \cite{Ilm94}, any $\varphi$-minimal surface can be viewed as a minimal surface in a conformal Riemannian 3-manifold $\Omega^{\varphi}:=(\Omega,\langle\cdot,\cdot\rangle^{\varphi})$ whose metric is defined for any $p \in\Omega^{\varphi}$ by

\begin{equation*}
\langle\cdot,\cdot\rangle^{\varphi}_{p}:=e^{\varphi (p)}\langle\cdot,\cdot\rangle_{p}.
\end{equation*}
Moreover, they can be viewed as critical points of the weighted volume functional,
\begin{equation*}
\mathcal{A}^{\varphi}(\Sigma)=\int_{\Sigma}\, e^{\varphi}\, d\Sigma,
\end{equation*}
where $d\Sigma$ is the induced volume element of $\Sigma$. 

In this paper we are interested in the case when the function $\varphi$ is invariant under a two-parameter group of translations. Up to rigid motions, if we write $p=(x_1, x_2, x_3)$ we can assume $\varphi$ depending only on $x_3$. Specifically, we will consider $\Omega$ as an open subset $\mathbb{R}^2 \times I$ and $\varphi : I \subset \mathbb{R} \rightarrow \mathbb{R}$ a smooth function. In this case, the equation \eqref{defphimin-original} is written as
\begin{equation}
H = -\dot{\varphi} \langle \vec{e}_3, N \rangle, \label{defphimin}
\end{equation}
where $\{\vec{e}_{i}\}_{i=1,2,3}$ is the usual orthonormal frame of $\mathbb{R}^{3}$ and $(\,\dot{ }\,)$ stands for the derivative with respect to $x_3$. A surface $\Sigma \subset \Omega$ which satisfies the equation \eqref{defphimin} will be called $[\varphi,\vec{e}_{3}]$\emph{-minimal surface}.

Some particular cases of these surfaces have been the key in the development of some issues of the differential geometry theory. 
We highlight the works \cite{HIMW,HIMW2,MSHS1,SX,Wang} for \textit{Translating soliton for the mean curvature flow}, when $\varphi$ is the identity and the works \cite{D,D1,RL} for \emph{singular $\alpha$-minimal surfaces}, when $\varphi (x_{3})=\alpha\text{log}(x_{3})$ for $\alpha\neq 0$. In fact,  the special case $\alpha=1$, from a physically point of view \cite{Poisson}, represents a membrane with intrinsic force vanishes under a gravitational field. 
\

Next, consider a smooth function $u:\mathcal{O}\subset\mathbb{R}^{2}\rightarrow\mathbb{R}$. We say that $\text{Graph}[u]$  is a $[\varphi,\vec{e}_{3}]$-minimal graph if and only if it solves the following elliptic equation
\begin{equation}
\label{ecuaciongrafos}
(1+u_{x}^{2})u_{yy}+(1+u_{y}^{2})u_{xx}-2u_{x}u_{u}u_{xy}=\dot{\varphi}(u)(1+u_{x}^{2}+u_{y}^{2}),
\end{equation}
where $u_{x},u_{y}$ denotes the partial derivative with respect to the first and second coordinate, respectively.  As consequence of the ellipticity of \eqref{ecuaciongrafos}, the Hopf's maximum principle hold \cite{Hopf}. The main class of $[\varphi,\vec{e}_{3}]$-minimal graphs that we will use in this paper are the $[\varphi,\vec{e}_{3}]$-catenary cylinders described in \cite{MM}. In fact, from the Theorem 3.7 of \cite{MM}, they are the only complete flat examples  together with the vertical planes and the \textit{tilted}-$[\varphi,\vec{e}_{3}]$-catenary cylinders.
\

Let $\varphi:]a,+\infty[\rightarrow ]b,c[$ with $a,b\in\mathbb{R}\cup\{-\infty\}$ and $c\in\mathbb{R}\cup\{+\infty\}$ be an strictly monotone diffeomorphism. A  $[\varphi, \vec{e}_3]$-catenary cylinder $\mathcal{G}^{h}$ is given by the cartesian product $\text{Graph}[u]\times\mathbb{R}$ where $u$  only depends of one variable, $u=u(x)$, and from \eqref{ecuaciongrafos}, it is solution of the following Cauchy's problem
\begin{equation}
 \label{htrans}
 \left\{
 \begin{array}{ll}
  u''(x)=\dot{\varphi}(u)(1+u'(x)^{2}) \\
  u(0)=h, \ \ u'(0)=0.
\end{array}
\right.
\end{equation}
The solution $u$  of \eqref{htrans}  is  even and it is defined in the  interval  $]-\Lambda_{h},\Lambda_{h}[$  with $\Lambda_{h}\in\mathbb{R}\cup\{+\infty\}$. The main properties and the asymptotic behaviour of these surfaces can be summarized in the following two results, proved in Section 3 of \cite{MM}.

\begin{theorem}\label{t2} Let $\varphi:\ ]a,+\infty[\ \longrightarrow \ ]b,c[$,  $ a,b\in \R\cup\{-\infty\}$,  $c\in  \R\cup\{\infty\}$ be a strictly increasing diffeomorphism, then the solution $u$  of \eqref{htrans} is defined in $]-\Lambda_{h},\Lambda_{h}[$, $\Lambda_{h} \in \R^{+}\cup\{+\infty\}$, it is convex,  symmetric about the $y$-axis and has a minimum at $x=0$. Moreover, 
\begin{enumerate}
\item if $c<\infty$, then $\Lambda_{h}=\infty$ and,   $ \left\{ 
\begin{array}{l}
\lim_{x\rightarrow \pm\infty} u(x)=\infty, \\
 \lim_{x\rightarrow \pm\infty} u'(x) = \pm\sqrt{\mathrm{e}^{2(c-\varphi(h))}-1}. \end{array} \right.$
 \item if $c=\infty$, 
 $ \lim_{x\rightarrow \pm\Lambda_{h}} u(x)=\infty, \quad
 \lim_{x\rightarrow \pm\Lambda_{h}} u'(x) = \pm\infty.$
\

In particular, if $\Lambda_{h}<\infty$, the graph of $u$ is asymptotic to two vertical lines. Moreover,
\item   $\Lambda_{h}<\infty$ if and only if $e^{-\varphi}\in L^{1}(]h,+\infty[)$,$\left( i.e \int_{h}^\infty e^{-\varphi(\lambda)} d\lambda < \infty \right)$ .
\item  If   $\Lambda_{\lambda}<\infty$ and $\dot{\varphi}$ is increasing (respectively, decreasing), then $\Lambda_\lambda$ is  decreasing (respectively, increasing) in $\lambda$.
\end{enumerate}

\begin{theorem}\label{t3}
 Let $\varphi:\ ]a,\infty[\ \longrightarrow \ ]b,c[$,  $ a,b\in \{\R,-\infty\}$,  $c\in \{\R,\infty\}$ be a strictly decreasing diffeomorphism, then the solution $u$  of \eqref{htrans} is defined in $]-\Lambda_{h},\Lambda_{h}[$, $\Lambda_{h} \in\R^{+}\cup\{+\infty\}$, it is concave,  symmetric about the $y$-axis and has a maximum at $x=0$. Moreover, 
\begin{enumerate}
\item if $c<\infty$, then 
$\Lambda_{h}<\infty $
  and,   $ \left\{ \begin{array}{l} \lim_{x\rightarrow \pm\Lambda_{h}} u(x)=a, \\
 \lim_{x\rightarrow \pm\Lambda_{h}} u'(x) = \pm\sqrt{\mathrm{e}^{2(c-\varphi(h))}-1}. \end{array} \right.$
\item if $c=\infty$, then 
$\Lambda_{h}<\infty $ if and only if $\int_a^{h}\mathrm{e}^{-\varphi(\lambda)} d\lambda < \infty, $
  and,   $$ \lim_{x\rightarrow \pm\Lambda_{h}} u(x)=a, \quad
 \lim_{x\rightarrow \pm\Lambda_{h}} u'(x) = \pm\infty.$$
\end{enumerate}
\end{theorem}
\begin{remark}
In the hypothesis  of Theorem \ref{t3}, the graph of $u$ is complete when $a=-\infty$. But in this case, by changing $\varphi$ by $-\varphi$, we can also apply Theorem \ref{t2}. Hereinafter, we always assume that $\varphi$ is a strictly increasing diffeomorphism.
\end{remark}

\begin{center}
\includegraphics[scale=0.17]{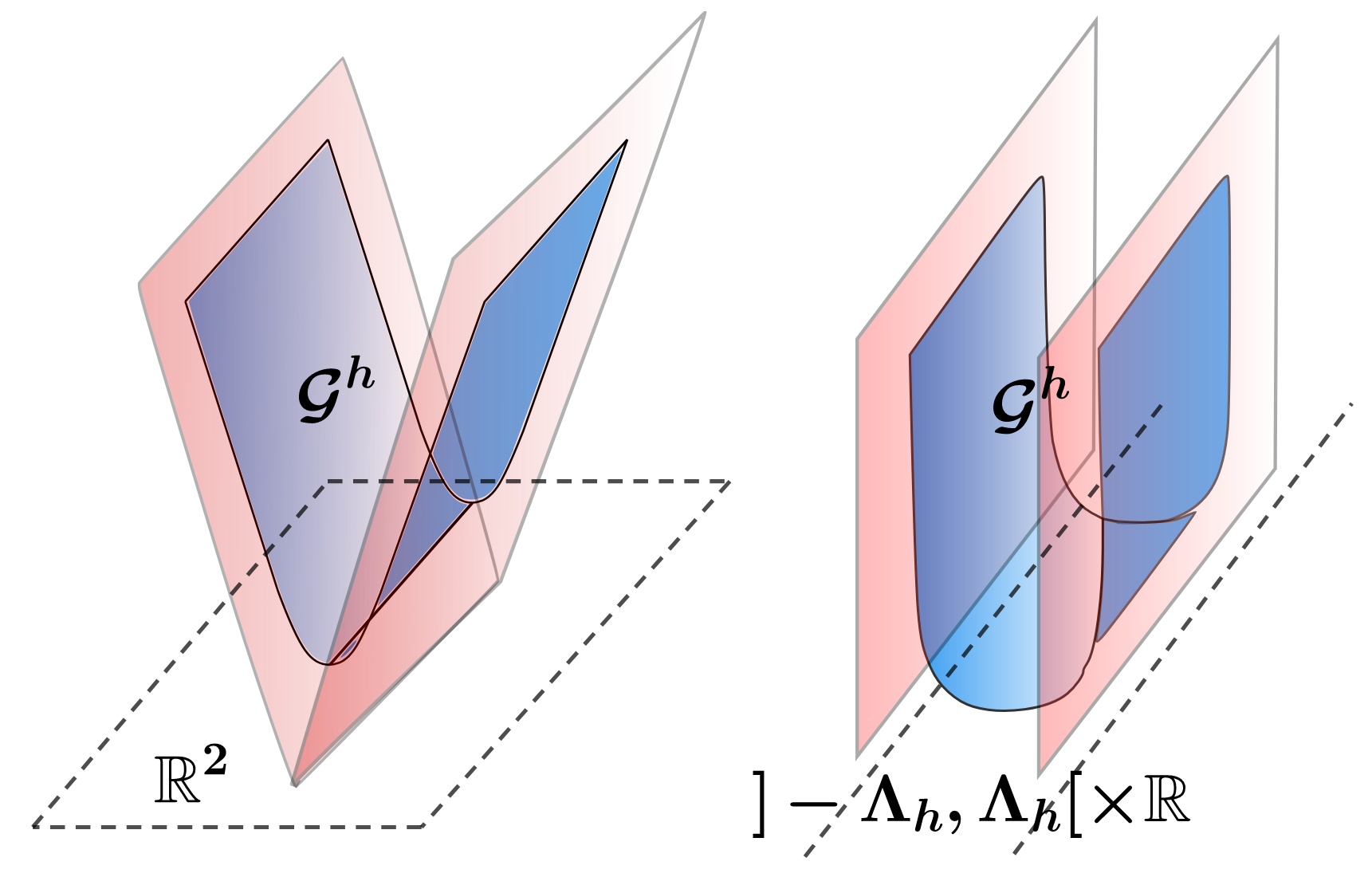}
\end{center}
\end{theorem}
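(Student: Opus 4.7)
The strategy is the standard one for an autonomous second-order ODE: reduce \eqref{htrans} to a first-order equation by finding an explicit first integral, and then read off all qualitative and quantitative information by quadrature. First I would record the qualitative features. Because $\varphi$ is strictly decreasing, $\dot{\varphi}(u)<0$, so the right-hand side of \eqref{htrans} is negative and $u''<0$ throughout the maximal interval; hence $u$ is concave. The equation and the initial data are invariant under $x\mapsto -x$, which forces $u(-x)=u(x)$, and combined with $u'(0)=0$ places the unique maximum at $x=0$ with value $h$.

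For the first integral I would multiply \eqref{htrans} by $2u'/(1+(u')^2)$ to obtain
$$\frac{d}{dx}\ln\bigl(1+(u')^2\bigr)=2\dot{\varphi}(u)\,u'=\frac{d}{dx}\bigl(2\varphi(u)\bigr),$$
and integrate from $0$ with the initial conditions to get $1+(u'(x))^2=e^{2(\varphi(u(x))-\varphi(h))}$. For $x>0$, concavity together with $u'(0)=0$ forces $u'<0$, so
$$u'(x)=-\sqrt{e^{2(\varphi(u(x))-\varphi(h))}-1}.$$
A standard maximal-interval argument shows that $u(x)$ must exit the domain of $\varphi$ at the endpoint, i.e.\ $u(x)\to a$ as $x\to\Lambda_h^-$: otherwise $(u,u')$ would remain in a compact subset of the phase space and the solution would extend past $\Lambda_h$.

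Separating variables in the first-order ODE then yields
$$\Lambda_h=\int_{a}^{h}\frac{du}{\sqrt{e^{2(\varphi(u)-\varphi(h))}-1}}.$$
Near $u=h$ the denominator behaves like $\sqrt{2|\dot{\varphi}(h)|(h-u)}$, so the integrand has only an integrable $1/\sqrt{\,\cdot\,}$ singularity there. The two cases of the statement are now read off by examining the other endpoint. When $c<\infty$, the denominator tends to the finite positive limit $\sqrt{e^{2(c-\varphi(h))}-1}$ as $u\to a$, so the integrand stays bounded and $\Lambda_h<\infty$ (under the implicit hypothesis $a>-\infty$); the corresponding limit of $u'$ is immediate from the first integral. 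When $c=\infty$, as $u\to a$ one has $\varphi(u)\to\infty$, so the integrand is asymptotic to $e^{\varphi(h)}e^{-\varphi(u)}$; hence $\Lambda_h<\infty$ if and only if $e^{-\varphi}\in L^{1}(]a,h[)$, while the same identity $1+(u')^2=e^{2(\varphi(u)-\varphi(h))}$ now gives $u'(x)\to\pm\infty$ as $x\to\pm\Lambda_h$.

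The proof is essentially a computation, so there is no serious obstacle. The only points that require some care are the extension argument guaranteeing that $u$ actually reaches the boundary value $a$ at $\Lambda_h$, and the recognition that the case $a=-\infty$ of the hypothesis (where the graph turns out to be complete) is really redirected to Theorem \ref{t2} via $\varphi\mapsto-\varphi$, as the remark following the statement points out.
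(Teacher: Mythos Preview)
The paper does not actually prove this statement: both Theorems~\ref{t2} and~\ref{t3} are quoted verbatim from Section~3 of \cite{MM}, as the sentence introducing them makes explicit. So there is no in-paper proof to compare your attempt against.

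That said, your argument is correct and is exactly the standard route one would expect the proof in \cite{MM} to take. The key observation is the first integral
\[
1+(u')^2 = e^{2(\varphi(u)-\varphi(h))},
\]
obtained by rewriting \eqref{htrans} as $\dfrac{d}{dx}\ln(1+(u')^2)=\dfrac{d}{dx}\,2\varphi(u)$, after which everything reduces to reading off the quadrature
\[
\Lambda_h=\int\frac{du}{\sqrt{e^{2(\varphi(u)-\varphi(h))}-1}}
\]
over the appropriate range. Your handling of the two endpoint behaviours (integrable $(h-u)^{-1/2}$ singularity at $u=h$; boundedness versus $e^{-\varphi(u)}$ asymptotics at the far end) is accurate, and you correctly flag that item~(1) of Theorem~\ref{t3} tacitly needs $a>-\infty$, the complete case $a=-\infty$ being redirected to Theorem~\ref{t2} by the remark.

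Two minor comments. First, you wrote out only the decreasing case (Theorem~\ref{t3}); the increasing case (Theorem~\ref{t2}) is obtained by the identical computation with the obvious sign changes, and items~(3) and~(4) of Theorem~\ref{t2} follow from the same integral formula for $\Lambda_h$ (item~(4) by differentiating $\Lambda_\lambda=\int_\lambda^\infty (e^{2(\varphi(u)-\varphi(\lambda))}-1)^{-1/2}\,du$ in $\lambda$, or by a direct comparison argument). Second, the step ``$u(x)\to a$ as $x\to\Lambda_h$'' is indeed the only place needing a sentence of care; your compactness/extension reasoning is the right one.
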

The goal of this paper is a uniqueness result for the $[\varphi,\vec{e}_{3}]$-catenary cylinders by their asymptotic behaviour. Roughly, if we have a surface $\Sigma$ that looks like a $[\varphi,\vec{e}_{3}]$-catenary cylinder $\mathcal{G}^{h}$ at infinity, in the sense of the definition \ref{convergenciaG} in Section 2, then our surface coincides with some $[\varphi,\vec{e}_{3}]$-catenary cylinder with the same behaviour $\mathcal{G}^{h}$. Motivated by the work \cite{MSHS1} of F. Mart\'in, J. P\'erez-Garc\'ia, A. Savas-Halilaj and K. Smoczyk  for the grim reaper cylinder translating soliton, we increase the family of $[\varphi,\vec{e}_{3}]$-minimal surfaces where these types of results hold under different assumptions of asymptotic behaviour when $\varphi$ is a strictly increasing convex diffeomorphism such that $e^{-\varphi}$ is integrable. From \cite{MM,MMJ}, this family of functions $\varphi$ is the natural candidate to consider for generalizing the result of \cite{MSHS1}. The statement of the main result of this paper tells us the following 

\begin{theorem}
\label{uni}
Let $\varphi:]a,+\infty[\rightarrow ]b,c[$, $a,b\in\mathbb{R}\cup\{-\infty\}$ and $c\in\mathbb{R}\cup\{+\infty\}$ be a strictly increasing convex diffeomorphism such that $e^{-\varphi}\in L^{1}(]a,+\infty[)$ and bounded quotient $\ddot{\varphi}/\dot{\varphi}$. If $\Sigma$ is a complete connected $[\varphi,\vec{e}_{3}]$-minimal graph $\mathcal{C}^{\infty}$-asymptotic to $[\varphi,\vec{e}_{3}]$-catenary cylinder $\mathcal{G}^{h}$, outside a cylinder, for some $h\in ]a,+\infty[$, then $\Sigma$ coincides with some $[\varphi,\vec{e}_{3}]$-catenary cylinder with the same behaviour that $\mathcal{G}^{h}$.
\end{theorem}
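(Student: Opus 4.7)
\medskip

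\noindent\textbf{Proof proposal.} The plan is to apply the Alexandrov moving plane technique twice---first to obtain a reflection symmetry orthogonal to $\vec{e}_{1}$, then translation invariance along $\vec{e}_{2}$---and then to reduce $\Sigma$ to the one-variable Cauchy problem \eqref{htrans}. After a translation in $\vec{e}_{1}$ I may assume that the axis of $\mathcal{G}^{h}$ is the $y$-axis, and write $\Sigma=\text{Graph}[v]$ over some domain $\Omega\subset\mathbb{R}^{2}$, with $v$ solving \eqref{ecuaciongrafos}.

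\emph{First step: symmetry about $\{x=0\}$.} I slide the family of planes $P_{t}:=\{x=t\}$ starting from values of $t$ just below $\Lambda_{h}$ and decreasing $t$. The $\mathcal{C}^{\infty}$-asymptotic hypothesis guarantees that for such $t$ the reflection $\Sigma^{*}_{t}$ of $\Sigma\cap\{x>t\}$ across $P_{t}$ lies strictly on one side of $\Sigma$ on the common $(x,y)$-domain. Let $t^{*}$ be the first value at which contact occurs. Setting $w(x,y):=v(x,y)-v(2t^{*}-x,y)$, the fact that both $v$ and $v(2t^{*}-\cdot,\cdot)$ satisfy \eqref{ecuaciongrafos}, combined with the mean value theorem, shows that $w$ solves a linear elliptic PDE on the overlap region. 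The interior Hopf maximum principle---or the boundary point lemma, in case of an asymptotic contact---gives $w\equiv 0$ in the component containing the contact point; connectedness of $\Sigma$ then promotes this to full symmetry of $\Sigma$ about $P_{t^{*}}$, and the asymptotic matching with $\mathcal{G}^{h}$ identifies $t^{*}=0$.

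\emph{Second step: invariance under $\vec{e}_{2}$-translations.} Because $\mathcal{G}^{h}$ is itself $\vec{e}_{2}$-invariant, for every $s\in\mathbb{R}$ the translate $\tau_{s}\Sigma:=\Sigma+s\vec{e}_{2}$ is also $\mathcal{C}^{\infty}$-asymptotic to $\mathcal{G}^{h}$ outside a (shifted) cylinder, so the difference $w_{s}(x,y):=v(x,y-s)-v(x,y)$ tends to $0$ at infinity on its domain of definition. Subtracting the instances of \eqref{ecuaciongrafos} satisfied by $v$ and by $v(\cdot,\cdot-s)$ and applying the mean value theorem yields a linear elliptic equation of the form $a_{ij}\partial_{ij}w_{s}+b_{i}\partial_{i}w_{s}+cw_{s}=0$ whose zeroth-order coefficient $c$ involves $\ddot{\varphi}/\dot{\varphi}$ evaluated along an intermediate height; the hypothesis that $\ddot{\varphi}/\dot{\varphi}$ is bounded is precisely what keeps $c$ bounded. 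With bounded coefficients and $w_{s}\to 0$ at infinity, a Phragm\'en--Lindel\"of-type version of the strong maximum principle forbids an interior nonzero extremum and yields $w_{s}\equiv 0$ for every $s\in\mathbb{R}$, so $v$ depends only on $x$.

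Combining both steps, $v(x,y)=u(x)$ with $u$ even and $u'(0)=0$, and by \eqref{ecuaciongrafos} $u$ solves \eqref{htrans} with some $u(0)=h'\in{]a,+\infty[}$. The asymptotic profile then pins down $h'$ uniquely via Theorem~\ref{t2}(iv), whose monotonicity of $h'\mapsto\Lambda_{h'}$ is available because $\varphi$ is convex (so $\dot{\varphi}$ is increasing); thus $\Sigma=\mathcal{G}^{h'}$ with the same asymptotic behavior as $\mathcal{G}^{h}$. The main obstacle is the second step: on the unbounded domain $\Omega$ the standard strong maximum principle is not directly available, and both the $\mathcal{C}^{\infty}$-decay of $\Sigma$ to $\mathcal{G}^{h}$ and the boundedness of $\ddot{\varphi}/\dot{\varphi}$ are needed simultaneously to secure the Phragm\'en--Lindel\"of-type variant required to annihilate $w_{s}$.
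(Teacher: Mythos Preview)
Your first step (reflection symmetry across $\Pi(0)$) is essentially the paper's Lemma~\ref{l2} and is correct in outline. The genuine gap is in your second step. You assert that $w_{s}(x,y)=v(x,y-s)-v(x,y)$ tends to $0$ ``at infinity on its domain of definition,'' but the asymptotic hypothesis only controls $\Sigma$ \emph{outside} a horizontal solid cylinder $\mathfrak{c}$ whose axis is the line $\{(0,x_{2},h):x_{2}\in\mathbb{R}\}$. As $|y|\to\infty$ with $x$ bounded away from $\pm\Lambda_{h}$, the corresponding points of $\Sigma$ may remain inside $\mathfrak{c}$ for all $y$ (indeed, on the actual catenary cylinder they do), so Definition~\ref{convergenciaG} says nothing whatsoever about $v$ there. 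The decay of $w_{s}$ in the $\vec{e}_{2}$-direction is precisely the $\vec{e}_{2}$-invariance you are trying to prove, and you have assumed it. A Phragm\'en--Lindel\"of device does not help, because you also have no growth bound on $w_{s}$ in that region.

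The paper fills this gap with substantial extra machinery that your proposal omits: a compactness result (Theorem~\ref{conv}, built on White's area blow-up barrier principle together with the existence of $[\varphi,\vec{e}_{3}]$-Bowls as barriers) showing that the translates $\Sigma-(0,b_{n},0)$ subconverge to a $[\varphi,\vec{e}_{3}]$-minimal surface with the same wing asymptotics, and then a sliding argument with translated half--catenary cylinders (Lemma~\ref{l3}, Proposition~\ref{comportamientocurvas}) forcing that limit to be a catenary cylinder. This is what controls the quotient $\eta_{2}/\eta_{3}$ along the $|y|$-large boundary arcs $\Lambda_{3,n},\Lambda_{4,n}$ of the compact exhaustion. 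A smaller point: the boundedness of $\ddot{\varphi}/\dot{\varphi}$ is not what bounds your zeroth-order coefficient (that coefficient is essentially $-\ddot{\varphi}\,(1+|\nabla v|^{2})$, which the quotient hypothesis does not control); in the paper it is used, via L'H\^opital, to guarantee that $\dot{\varphi}/\sqrt{1+u'^{2}}$ and $(\Lambda_{h}-x_{1})\sqrt{1+u'^{2}}$ have finite limits as $x_{1}\to\Lambda_{h}$, which is what forces $\eta_{2}/\eta_{3}\to 0$ along the high wings $\Lambda_{1,n},\Lambda_{2,n}$.
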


We would like to point out that we generalize this uniqueness for $[\varphi,\vec{e}_{3}]$-minimal surface assuming that $\Sigma$ is a vertical graph. The main advantage of this hypothesis is that, we only need to prove that the angle function $\langle N,\vec{e}_{2}\rangle$ vanishes everywhere because in such case,  $\Sigma$ is invariant by translations in the direction $\vec{e}_{2}$ and so, from \cite{MM}, it must coincide with some $[\varphi,\vec{e}_{3}]$-catenary cylinder. Otherwise, this conclusion is not true in general. The uniqueness of the grim reaper translating soliton is guaranteed by  \cite{MSHS1}, since it was showed that the Gauss curvature vanishes everywhere and applying the Theorem B of \cite{MSHS2}, $\Sigma$ must be a grim reaper cylinder. 
\

The paper is organized as follows. In Section 2, we introduce the notation together with some fundamental equations  that we will use throughout the work.  In Section 3, we prove a compactness result to studying the infinity of our surfaces under horizontal translations. Finally, the proof of Theorem \ref{uni} appears in Section 4. We will show that our graphs are invariant in the direction $\vec{e}_{2}$ because the function $\langle N,\vec{e}_{2}\rangle$ vanishes everywhere due to the asymptotic behaviour of $\Sigma$.

\section{Preliminaries}

In this section, we introduce the notations, definitions and the fundamental equations that we will use throughout the paper.

For an orientable immersion $\Sigma$ in $\mathbb{R}^{3}$, we denote by $\nabla$, $\Delta$ the gradient and the Laplacian on $\Sigma$, respectively. The second fundamental form on $\Sigma$ will be denoted by $\mathcal{S}$, so that $H=\text{trace}(\mathcal{S})$, $K$, and $\vert\mathcal{S}\vert^{2}$ stand for the mean curvature, the Gauss curvature and the squared norm of the second fundamental form on $\Sigma$, respectively. Besides that, let us define the coordinate functions $x_{i}:\Sigma\rightarrow\mathbb{R}$ by $x_{i}(p)=\langle p,\vec{e}_{i}\rangle$ and the angle functions $\eta_{i}:\Sigma\rightarrow\mathbb{R}$ by $\eta_{i}(p)=\langle\vec{e}_{i},N(p)\rangle$, for any $p\in\Sigma$.

Bearing in mind our goals, we will need a good control of our surfaces in the infty motivating the following definitions.
\begin{definition}
Consider a family $t$-parameter of vertical planes $$\Pi (t)=\{(x_{1}, x_{2}, x_{3})\in\mathbb{R}^{3}: x_{1}=t\}$$ and $\Sigma$ be a subset of $\mathbb{R}^{3}$. The right part of $\Sigma$ with respect to the plane $\Pi(t)$ is defined by the subset $$\Sigma_{+}(t)=\{ (x_{1}, x_{2}, x_{3})\in\Sigma: x_{1}>t\},$$ and the left part with respect to $\Pi(t)$ is defined by $\Sigma_{-}(t)=\Sigma \backslash\overline{\Sigma_{+}(t)}.$ The reflection of $\Sigma_{+}(t)$ with respect to the plane $\Pi(t)$ is given by $$\Sigma_{+}^{*}(t)=\{(2t-x_{1},x_{2},x_{3})\in\mathbb{R}^{3}: (x_{1},x_{2},x_{3})\in\Sigma_{+}(t)\}.$$
\end{definition}

Taking into account the shape of the $[\varphi,\vec{e}_{3}]$-catenary cylinders given by the Theorem \ref{t2}. The following definition is the key to understand the smooth convergence of a surface to $[\varphi,\vec{e}_{3}]$-catenary cylinder.

\begin{definition}
\label{convergenciaG}
Let $\varphi: ]a,+\infty[\rightarrow]b,c[$ $a,b\in \mathbb{R}\cup\{-\infty\}$ , $c\in\mathbb{R}\cup\{+\infty\}$ be a strictly increasing diffeomorphism such that $e^{-\varphi} \in L^{1}(]a,+\infty[)$ and $\mathcal{G}^{h}$ be a $[\varphi,\vec{e}_{3}]$-catenary cylinder in $\mathbb{R}^{3}_{a}=\{p\in\mathbb{R}^{3}: \langle p,\vec{e}_{3}\rangle\geq a\}$ for some $h\in ]a,+\infty[$. 
\\

We will say that a smooth surface $\Sigma$ is $\mathcal{C}^{k}$-asymptotic to the \textit{right part} $\mathcal{G}^{h}_{+}(0)$ of $\mathcal{G}^{h}$  if for any $\varepsilon>0$ there exists $\delta>0$ such that $\Sigma$  can parametrized as a graph over $\mathcal{G}^{h}$ as follows $$\widetilde{F}:T_{\delta,h}^{+}\subset\mathbb{R}^{2}\rightarrow\mathbb{R}^{3} \ \  \ \ \widetilde{F}=F+\overline{u}\, N_{F},$$
where $T_{\delta,h}^{+}:=]\Lambda_{h}-\delta,\Lambda_{h}[\times\mathbb{R}$, $F(x_{1},x_{2})=(x_{1},x_{2},u(x_{1}))$ parametrizes $\mathcal{G}^{h}$ on $T_{\delta,h}^{+}$, $u$ is a solution of \eqref{htrans} with $u(0)=h$, $\overline{u}:T_{\delta,h}^{+}\rightarrow\mathbb{R}$ is a function in $\mathcal{C}^{k}(T_{\delta,h}(+))$ such that
$$\sup_{T_{\delta,h}^{+}}\vert\overline{u}\vert <\varepsilon \ \ , \ \ \sup_{T_{\delta,h}^{+}}\vert D^{j}\overline{u}\vert<\varepsilon, \text{ for any } j\in\{1,\cdots,k\}.$$
and $N_{F}$ is the downwards unit normal of $\mathcal{G}^{h}$. Analogously,  we will say that a smooth surface $\Sigma$ is $\mathcal{C}^{k}$-asymptotic to left \textit{left part} $\mathcal{G}^{h}_{-}(0)$ of $\mathcal{G}^{h}$  if for any $\varepsilon>0$ there exists $\delta>0$ such that $\Sigma$  can parametrized as a graph over $\mathcal{G}^{h}$ as follows $$\widetilde{F}:T_{\delta,h}^{-}\subset\mathbb{R}^{2}\rightarrow\mathbb{R}^{3} \ \  \ \ \widetilde{F}=F+\overline{u}\, N_{F},$$
where $T_{\delta,h}^{-}:=]-\Lambda_{h},-\Lambda_{h}+\delta[\times\mathbb{R}$, $F(x_{1},x_{2})=(x_{1},x_{2},u(x_{1}))$ parametrizes $\mathcal{G}^{h}$ on $T_{\delta,h}^{-}$, $u$ is a solution of \eqref{htrans} with $u(0)=h$, $\overline{u}:T_{\delta,h}^{-}\rightarrow\mathbb{R}$ is a function in $\mathcal{C}^{k}(T_{\delta,h}^-)$ such that
$$\sup_{T_{\delta,h}^{-}}\vert\overline{u}\vert <\varepsilon \ \ , \ \ \sup_{T_{\delta,h}^{-}}\vert D^{j}\overline{u}\vert<\varepsilon, \text{ for any } j\in\{1,\cdots,k\}.$$

In particular, we say that $\Sigma$ is $\mathcal{C}^{k}$-asymptotic to $\mathcal{G}^{h}$ if and only if $\Sigma$ is $\mathcal{C}^{k}$-asymptotic to the both branches $\mathcal{G}^{h}_{+}(0)$ and $\mathcal{G}^{h}_{-}(0)$. Moreover, a smooth surface $\Sigma$ is called $\mathcal{C}^{k}$-asymptotic to $\mathcal{G}^{h}$, outside a cylinder, if  there exists a solid cylinder $\mathfrak{c}$ whose axis is $\mathcal{G}^{h}\cap\Pi(0)$ and the set $\Sigma-\mathfrak{c}$ consists of two connected components $\Sigma_{1}$ and $\Sigma_{2}$ which are $\mathcal{C}^{k}$-asymptotic to $\mathcal {G}^{h}_{+}(0)$ and $\mathcal{G}^{h}_{-}(0)$, respectively.
\end{definition}

 Next, we compute the fundamental equations that we will use in Section 4.  Following the proof of Lemma 2.1 in \cite{MM}, we get the following result.
\begin{lemma}
The following equations holds
\begin{align}
&\Delta x_{i}+\langle\nabla\varphi,\nabla x_{i}\rangle=0 \ \ \text{ for } i=1,2 \label{laplacianox1} \\
&\Delta x_{3}+\langle\nabla\varphi,\nabla x_{3}\rangle=\dot{\varphi} , \label{laplacianoaltura} \\
&\Delta \eta_{i}+\langle\nabla\varphi,\nabla\eta_{i}\rangle+\vert\mathcal{S}\vert^{2}\eta_{i}=\ddot{\varphi}\eta_{i}\eta_{3}^{2}  \ \ \text{for } i=1,2  \label{laplacianoeta1} \\
&\Delta\eta_{3}+\langle\nabla\varphi,\nabla\eta_{3}\rangle+\vert\mathcal{S}\vert^{2}\eta_{3}=-\ddot{\varphi}\eta_{3}\vert\nabla x_{3}\vert^{2}. \label{laplacianoeta}
\end{align}
\end{lemma}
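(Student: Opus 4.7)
The plan is to derive all four identities by specializing two classical Laplacian formulas on a hypersurface of $\mathbb{R}^3$ to the $[\varphi,\vec{e}_3]$-minimal condition $H=-\dot{\varphi}\,\eta_3$. The inputs are: (i) the position identity $\Delta x_k=-H\eta_k$, i.e.\ the scalar form of $\Delta F=-HN$ for the isometric immersion $F\colon\Sigma\to\mathbb{R}^3$; and (ii) the Simons-type identity $\Delta\eta+|\mathcal{S}|^2\eta=\langle\nabla H,\vec{e}\rangle$, valid for $\eta=\langle N,\vec{e}\rangle$ and any fixed $\vec{e}\in\mathbb{R}^3$, which is the usual consequence of Codazzi in flat ambient. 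Both are already used in Lemma~2.1 of \cite{MM}, and everything else is algebra.

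For equations \eqref{laplacianox1} and \eqref{laplacianoaltura}, since $\varphi$ depends only on $x_3$ one has $\nabla\varphi=\dot{\varphi}\,\nabla x_3$ on $\Sigma$, with $\nabla x_3=\vec{e}_3-\eta_3 N$. A direct pairing gives
\[
\langle\nabla\varphi,\nabla x_k\rangle \;=\; \dot{\varphi}\,\langle\vec{e}_3-\eta_3 N,\,\vec{e}_k-\eta_k N\rangle \;=\; \dot{\varphi}\,(\delta_{3k}-\eta_3\eta_k),
\]
and adding $\Delta x_k=-H\eta_k=\dot{\varphi}\,\eta_3\eta_k$ makes the $\eta_3\eta_k$ cross terms cancel, leaving $\dot{\varphi}\,\delta_{3k}$. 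This is zero for $k=1,2$ and $\dot{\varphi}$ for $k=3$.

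For equations \eqref{laplacianoeta1} and \eqref{laplacianoeta}, differentiating $H=-\dot{\varphi}\,\eta_3$ on $\Sigma$ by the chain rule yields $\nabla H=-\ddot{\varphi}\,\eta_3\,\nabla x_3-\dot{\varphi}\,\nabla\eta_3$. Pairing with $\vec{e}_i$, I use the symmetry
\[
\langle\nabla\eta_3,\vec{e}_i\rangle \;=\; \langle\nabla x_3,\nabla\eta_i\rangle,
\]
which follows from $\nabla\eta_k=\pm\,\mathcal{S}(\vec{e}_k-\eta_k N)$ and the self-adjointness of the shape operator. This identifies $\dot{\varphi}\,\langle\nabla\eta_3,\vec{e}_i\rangle$ with $\langle\nabla\varphi,\nabla\eta_i\rangle$, and I transfer this term to the left-hand side of the Simons-type identity. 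What remains on the right is $-\ddot{\varphi}\,\eta_3(\delta_{3i}-\eta_3\eta_i)$, equal to $\ddot{\varphi}\,\eta_i\eta_3^{\,2}$ for $i=1,2$ and to $-\ddot{\varphi}\,\eta_3(1-\eta_3^{\,2})=-\ddot{\varphi}\,\eta_3\,|\nabla x_3|^2$ for $i=3$.

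The only non-bookkeeping step is the Simons-type identity itself. I would prove it by picking a local orthonormal tangent frame $\{e_1,e_2\}$ geodesic at the base point, expanding $\sum_j e_j(e_j\langle N,\vec{e}\rangle)$ through the Weingarten relation, and tracking the two resulting contributions: a $-|\mathcal{S}|^2\eta$ term coming from the normal part of $\overline{\nabla}_{e_j}e_k$, and a divergence term $\sum_j e_j(\mathcal{S}_{jk})\langle e_k,\vec{e}\rangle$ which Codazzi in $\mathbb{R}^3$ converts into $\langle\nabla H,\vec{e}\rangle$. Apart from keeping the signs of $\mathcal{S}$, $H$, and $N$ mutually consistent throughout, I foresee no real obstacle; the lemma is a routine specialization of the computation already performed in Lemma~2.1 of \cite{MM}.
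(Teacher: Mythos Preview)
Your proof is correct and follows essentially the same route as the paper: both arguments combine the position identity $\Delta x_k=-H\eta_k$ with the formula $\Delta N+|\mathcal{S}|^2 N=\nabla H$ (your ``Simons-type identity'' after pairing with $\vec{e}_i$), substitute $H=-\dot{\varphi}\,\eta_3$, and then use the self-adjointness of the shape operator to rewrite $\dot{\varphi}\langle\nabla\eta_3,\vec{e}_i^{T}\rangle$ as $\langle\nabla\varphi,\nabla\eta_i\rangle$. The only cosmetic difference is that the paper records the intermediate equation $\Delta N+\dot{\varphi}\nabla\eta_3+\ddot{\varphi}\eta_3\nabla x_3+|\mathcal{S}|^2 N=0$ before pairing, whereas you pair first and then simplify.
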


\begin{proof}
Fix  $p\in\Sigma$ and consider $\{v_{j}\}_{j=1,2}$ an orthonormal frame of $T_{p}\Sigma$. Then,
\begin{equation}
\label{lapla}
\Delta x_{i}=\sum_{j=1}^{2}\langle\nabla_{v_{j}}\nabla x_{i},v_{j}\rangle,
\end{equation}
where $\nabla_{v_{j}}$ is the Levi-Civita connection on $\Sigma$. Moreover, it is clear that
\begin{equation}
\label{grad}
\nabla x_{i}=\vec{e}_{i}^{T}=\vec{e}_{i}-\langle \vec{e}_{i}, N\rangle N \ \ \text{and} \ \ \nabla\eta_{i}=\nabla_{\vec{e}_{i}^{T}}N.
\end{equation}
From \eqref{lapla}, \eqref{grad} and the Laplace-Beltrami equation, we have that
\begin{equation}
\label{lapla2}
\Delta x_{i}=-\langle \vec{e}_{i},N\rangle\sum_{j=1}^{2}\langle \nabla_{v_{j}} N,v_{j}\rangle=-\langle \vec{e}_{i}, N\rangle H.
\end{equation}
Hence, from the definition \eqref{defphimin} and the equation \eqref{lapla2}, it is proved that
\begin{align*}
\Delta x_{i}=&\dot{\varphi}\langle \vec{e}_{i}, N\rangle\langle \vec{e}_{3}, N\rangle=\dot{\varphi}\langle \vec{e}_{3},\vec{e}_{i}-\vec{e}_{i}^{T}\rangle=-\langle\nabla\varphi,\nabla x_{i}\rangle \ \ i=1,2. \\
&\Delta x_{3}=\dot{\varphi}\langle \vec{e}_{3}, N\rangle^{2}=\dot{\varphi}(1-\vert\nabla x_{3}\vert^{2})=\dot{\varphi}-\langle\nabla\varphi,\nabla x_{3}\rangle.
\end{align*}
Next, from \eqref{defphimin} and the well known equation $\Delta N=\nabla H-\vert\mathcal{S}\vert^{2} N$
\begin{equation*}
\label{laplaN}
\Delta N+\dot{\varphi}\nabla\eta_{3}+\ddot{\varphi}\eta_{3}\nabla x_{3}+\vert\mathcal{S}\vert^{2}N=0.
\end{equation*}
Consequently,
\begin{align}
\Delta\eta_{i}+&\vert\mathcal{S}\vert^{2}\eta_{i}=-\dot{\varphi}\langle\nabla\eta_{3}, \vec{e}_{i}^{T}\rangle-\ddot{\varphi}\eta_{3}\langle\nabla x_{3},\vec{e}_{i}^{T}\rangle \text{ for } i=1,2  \label{laplaeta1} \\
&\Delta\eta_{3}+\vert\mathcal{S}\vert^{2}\eta_{3}=-\dot{\varphi}\langle\nabla\eta_{3},\vec{e}_{3}^{T}\rangle-\ddot{\varphi}\eta_{3}\vert\nabla x_{3}\vert^{2}.  \label{laplaeta3}
\end{align}
On the other hand, by a simple computation
\begin{align}
\dot{\varphi}\langle\nabla\eta_{3}, \vec{e}_{i}^{T}\rangle=\dot{\varphi}\mathcal{S}(\nabla x_{3}, \vec{e}_{i}^{T})=\langle\nabla\varphi,\nabla \eta_{i}\rangle \text{ for } i=1,2,3. \label{gradeta1}
\end{align}
Then, the lemma follows from \eqref{laplaeta1}, \eqref{laplaeta3} and \eqref{gradeta1}.
\end{proof}
From the previous expressions \eqref{laplacianox1},\eqref{laplacianoaltura},\eqref{laplacianoeta1} and \eqref{laplacianoeta}, we can prove the following result using the same equations as in \cite{MMJ}.
\begin{proposition}
Let $\varphi:]a,+\infty[\rightarrow]b,c[$ , $a,b\in\mathbb{R}\cup\{-\infty\}$ and $c\in\mathbb{R}\cup\{+\infty\}$ be a smooth function and $\Sigma$ be a strictly mean convex $[\varphi,\vec{e}_{3}]$-minimal immersion in $\mathbb{R}^{3}_{a}$. Then,
\begin{align}
\Delta^{\varphi}\left(\frac{\eta_{2}}{\eta_{3}}\right)+2\langle\nabla\left(\frac{\eta_{2}}{\eta_{3}}\right),\frac{\nabla\eta_{3}}{\eta_{3}}\rangle=\ddot{\varphi}\frac{\eta_{2}}{\eta_{3}} \label{LAP2}.
\end{align}
where $\Delta^{\varphi}=\Delta+\langle\nabla\varphi,\nabla\cdot\rangle$ is the drift Laplacian of $\Sigma$.
\end{proposition}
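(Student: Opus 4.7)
The plan is to treat $f := \eta_2/\eta_3$ (well defined since strict mean convexity together with $H=-\dot\varphi\,\eta_3$ forces $\eta_3\neq 0$ on $\Sigma$) and apply the product/quotient rule for the drift Laplacian $\Delta^\varphi$ to the identity $\eta_2 = f\eta_3$. The ingredients are already packaged in the preceding lemma: rewriting equations \eqref{laplacianoeta1} and \eqref{laplacianoeta} in terms of the operator $\Delta^\varphi = \Delta + \langle\nabla\varphi,\nabla\cdot\rangle$, one has
\begin{equation*}
\Delta^{\varphi}\eta_{2} = \bigl(\ddot\varphi\,\eta_{3}^{\,2} - |\mathcal{S}|^{2}\bigr)\eta_{2},
\qquad
\Delta^{\varphi}\eta_{3} = -\bigl(\ddot\varphi\,|\nabla x_{3}|^{2} + |\mathcal{S}|^{2}\bigr)\eta_{3}.
\end{equation*}
These are the only two PDEs I will need.

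From $\eta_2 = f\eta_3$ and the Leibniz rule for $\Delta$ and $\langle\nabla\varphi,\nabla\cdot\rangle$, I will derive the elementary identity
\begin{equation*}
\Delta^{\varphi}\eta_{2} \;=\; \eta_{3}\,\Delta^{\varphi} f \;+\; f\,\Delta^{\varphi}\eta_{3} \;+\; 2\,\langle \nabla f,\nabla\eta_{3}\rangle.
\end{equation*}
Solving for $\eta_3\,\Delta^\varphi f$ and plugging in the two expressions above gives
\begin{equation*}
\eta_{3}\,\Delta^{\varphi} f \;=\; \bigl(\ddot\varphi\,\eta_{3}^{\,2}-|\mathcal{S}|^{2}\bigr) f\eta_{3} + \bigl(\ddot\varphi\,|\nabla x_{3}|^{2}+|\mathcal{S}|^{2}\bigr) f\eta_{3} - 2\,\langle\nabla f,\nabla\eta_{3}\rangle.
\end{equation*}
The $|\mathcal{S}|^2$-terms cancel, and since $|\nabla x_3|^2 = 1 - \eta_3^2$ on a hypersurface in $\mathbb{R}^3$, the $\ddot\varphi$-terms collapse to $\ddot\varphi\, f\eta_3$. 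Dividing by $\eta_3$ yields \eqref{LAP2}.

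There is no real obstacle here: the proof is essentially a one-line algebraic manipulation once the two $\Delta^\varphi$-equations for $\eta_2$ and $\eta_3$ are written down. The only point that deserves a sentence of justification in the write-up is that $\eta_3$ is nowhere zero (to make $f$ and the division legitimate), which is exactly the hypothesis of strict mean convexity combined with $H=-\dot\varphi\,\eta_3$; and the small identity $|\nabla x_3|^2 = 1 - \langle\vec e_3,N\rangle^2$, which is the orthogonal decomposition of $\vec e_3$ into tangent and normal parts along $\Sigma$.
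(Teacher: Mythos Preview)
Your proof is correct and follows essentially the same approach as the paper: the paper records the general quotient identity $\Delta^{\varphi}(f/g)+2\langle\nabla(f/g),\nabla g/g\rangle=(g\,\Delta^{\varphi}f-f\,\Delta^{\varphi}g)/g^{2}$ and then substitutes the drift-Laplacian formulas \eqref{laplacianoeta1}, \eqref{laplacianoeta} for $\eta_2$ and $\eta_3$, which is exactly your product-rule computation on $\eta_2=f\eta_3$ rewritten. Your explicit justification that $\eta_3\neq 0$ and the use of $|\nabla x_3|^2=1-\eta_3^2$ are the only details the paper leaves implicit.
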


\begin{proof}
By a straightforward computation, for any smooth functions $f,g:\Sigma\rightarrow\mathbb{R}$ with $g\neq 0$ everywhere we have that,
\begin{equation}
\label{expre}
\Delta^{\varphi}\left(\frac{f}{g}\right)+2\langle\nabla\left(\frac{f}{g}\right),\frac{\nabla g}{g}\rangle=\frac{g\, \Delta^{\varphi}f-f\, \Delta^{\varphi} g}{g^{2}}.
\end{equation}
Consequently, the proof follows applying \eqref{expre} in the equation \eqref{LAP2} together with \eqref{laplacianox1}, \eqref{laplacianoaltura}, \eqref{laplacianoeta1} and \eqref{laplacianoeta}.
\end{proof}
From the ellipticity of the equation \eqref{LAP2} and bearing in mind the goal of proving that the function $\eta_{2}$ vanishes everwyhere, we finish this section writing the previous quotient $\eta_{2}/\eta_{3}$ as a function over the $[\varphi,\vec{e}_{3}]$-catenary cylinder to getting the control of this function at infinity. In Section 4, we will see that the quotient $\eta_{2}/\eta_{3}$ tends to zero and, from a strong maximum principle, we will prove that $\eta_{2}/\eta_{3}$ vanishes everywhere. In particular, the angle $\eta_{2}$ will always be zero as we wanted.

Consider a $[\varphi,\vec{e}_{3}]$-catenary cylinder $\mathcal{G}^{h}$ contained in the slab $]-\Lambda_{h},\Lambda_{h}[\times\mathbb{R}^{2}$ (posibly all $\mathbb{R}^{3}$)  parametrized by
$$F(x_{1},x_{2})=(x_{1},x_{2},u(x_{1})) \ \ (x_{1},x_{2})\in ]-\Lambda_{h},\Lambda_{h}[\times\mathbb{R},$$
where $u$ is a smooth function satisfying \eqref{htrans} with $u(0)=h$ for some $h\in ]a,+\infty[$. Now, suppose that our immersion $\Sigma$ is a vertical $[\varphi,\vec{e}_{3}]$-minimal graph $\mathcal{C}^{\infty}$-asymptotic to $\mathcal{G}^{h}$, outside a cylinder. Then, for any $\varepsilon>0$ there exists $\delta>0$ such that the right part $\Sigma_{+}(\Lambda_{h}-\delta)$  can be parametrized by
$\widetilde{F}:T_{\delta,h}^{+}\rightarrow\mathbb{R}^{3}$ where $ \widetilde{F}=F+\overline{u}N_{F}$ and $\overline{u}:T_{\delta,h}^{+}\rightarrow\mathbb{R}$ is a smooth function such that
\begin{equation*}
\sup_{T_{\delta,h}^{+}}\{\vert\overline{u}\vert\}<\varepsilon \ \ \text{and} \ \ \sup_{T_{\delta,h}^{+}}\{\vert D^{j}\overline{u}\vert\}<\varepsilon \ \ \text{for any}\ \ j\in\mathbb{N},
\end{equation*}
and $N_{F}$ is the unit normal vector of $\mathcal{G}^{h}$ given by,
\begin{equation}
 \label{normalF}
N_{F}=\left( \frac{u'}{\sqrt{1+u'^{2}}},0,-\frac{1}{\sqrt{1+u'^{2}}}\right).
\end{equation}
Next, we compute the Gauss map $N_{\widetilde{F}}$ of $\Sigma$ with respect to the following orthogonal frame $\{\frac{\partial F}{\partial x_{1}},\frac{\partial F}{\partial x_{2}},N_{F}\}.$
Notice that,
\begin{equation}
\label{e1}
\frac{\partial F}{\partial x_{1}}=(1,0,u') \ \ \text{and} \ \ \frac{\partial F}{\partial x_{2}}=(0,1,0).
\end{equation}
If we denote by
\begin{equation}
\label{e2}
E_{1}=\left( \frac{1}{\sqrt{1+u'^{2}}},0,\frac{u'}{\sqrt{1+u'^{2}}}\right) \ \ \text{and} \ \ E_{2}=\frac{\partial F}{\partial x_{2}},\end{equation}
then ,
\begin{equation}
\label{e3}
\frac{\partial N_{F}}{\partial x_{1}}=\dot{\varphi}E_{1} \ \ \text{and} \ \ \frac{\partial N_{F}}{\partial x_{2}}=0.
\end{equation}
Consequently, from \eqref{e1}, \eqref{e2} and \eqref{e3}, we get that
\begin{align}
&\frac{\partial\widetilde{F}}{\partial x_{1}}=(\sqrt{1+u'^{2}}+\overline{u}\dot{\varphi})E_{1}+\overline{u}_{x_{1}}N_{F}, \label{v1} \\
&\frac{\partial\widetilde{F}}{\partial x_{2}}=E_{2}+\overline{u}_{x_{2}}N_{F}, \label{v2} \\
\frac{\partial\widetilde{F}}{\partial x_{1}}\wedge\frac{\partial\widetilde{F}}{\partial x_{2}}&=-\overline{u}_{x_{1}}E_{1}-(\sqrt{1+u'^{2}}+\overline{u}\dot{\varphi})\overline{u}_{x_{2}}E_{2}+(\sqrt{1+u'^{2}}+\overline{u}\dot{\varphi})N_{F} \label{v3}.
\end{align}
Thus, from \eqref{v1}, \eqref{v2} and \eqref{v3}, the normal $N_{\widetilde{F}}$ can be written as
\begin{equation}
\label{NormalFF}
N_{\widetilde{F}}=\frac{\left(-\frac{\overline{u}_{x_{1}}}{1+u'^{2}}\right)\frac{\partial F}{\partial x_{1}}-\overline{u}_{x_{2}}\left(1+\overline{u}\frac{\dot{\varphi}}{\sqrt{1+u'^{2}}}\right)\frac{\partial F}{\partial x_{2}}+\left(1+\overline{u}\frac{\dot{\varphi}}{\sqrt{1+u'^{2}}}\right) N_{F}}{\sqrt{\frac{\overline{u}_{x_{1}}^{2}}{1+u'^{2}}+(1+\overline{u}_{x_{2}}^{2})\left(1+\overline{u}\frac{\dot{\varphi}}{\sqrt{1+u'^{2}}}\right)^{2}}}.
\end{equation}
Finally, from \eqref{NormalFF}, we compute the previous quotient $\eta_{2}/\eta_{3}$  over $\mathcal{G}^{h}$ as we wanted. From the equations \eqref{normalF} and \eqref{e1}, we get that
\begin{equation}
\label{secoorde}
\langle\frac{\partial F}{\partial x_{1}},\vec{e}_{2}\rangle=0,  \ \ \langle\frac{\partial F}{\partial x_{2}},\vec{e}_{2}\rangle=1, \ \ \langle N_{F},\vec{e}_{2}\rangle=0,
\end{equation}
\begin{equation}
\label{thirdcoorde}
\langle\frac{\partial F}{\partial x_{1}},\vec{e}_{3}\rangle=u',  \ \ \langle\frac{\partial F}{\partial x_{2}},\vec{e}_{3}\rangle=0, \ \ \langle N_{F},\vec{e}_{3}\rangle=-\frac{1}{\sqrt{1+u'^{2}}}.
\end{equation}
Consequently, from \eqref{NormalFF}, \eqref{secoorde} and \eqref{thirdcoorde}, we prove the following result.
\begin{proposition}Under the assumptions above, for any $\varepsilon>0$, there exists $\delta>0$ small enough such that the following formula holds
\label{formula}
\begin{equation}
\label{HFF}
\frac{\eta_{2}}{\eta_{3}}=\overline{u}_{x_{2}}\sqrt{1+u'^{2}}\left(\frac{1+\overline{u}\left(\frac{\dot{\varphi}}{\sqrt{1+u'^{2}}}\right)}{1+\overline{u}_{x_{1}}\frac{u'}{\sqrt{1+u'^{2}}}+\overline{u}\left(\frac{\dot{\varphi}}{\sqrt{1+u'^{2}}}\right)}\right) \text{ on } T_{\delta,h}^{\pm} 
\end{equation}
respectively, where $\overline{u}:T_{\delta,h}^{\pm}\times\mathbb{R}\rightarrow\mathbb{R}$ is a smooth function defined in \ref{convergenciaG} such that
\begin{equation*}
\sup_{T_{\delta,h}^{\pm}}\{\vert\overline{u}\vert\}<\varepsilon \ \ \text{and} \ \ \sup_{T_{\delta,h}^{\pm}}\{\vert D^{j}\overline{u}\vert\}<\varepsilon \ \ \text{for any}\ \ j\in\mathbb{N}.
\end{equation*}
\end{proposition}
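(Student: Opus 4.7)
The plan is a direct computation: substitute the formula \eqref{NormalFF} for $N_{\widetilde F}$ into the definitions $\eta_i=\langle N_{\widetilde F},\vec e_i\rangle$ for $i=2,3$, apply the inner products \eqref{secoorde}--\eqref{thirdcoorde}, and observe that the common denominator of $N_{\widetilde F}$ drops out when forming the quotient $\eta_2/\eta_3$. The role of $\delta$ is only to guarantee that the graph parametrization $\widetilde F=F+\overline u\,N_F$ over $T_{\delta,h}^{\pm}$ exists; this is exactly the content of Definition \ref{convergenciaG}, so no further choice of $\delta$ is needed for the formula itself.

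Concretely, pairing \eqref{NormalFF} with $\vec e_2$ and using \eqref{secoorde} kills the contributions of $\partial F/\partial x_1$ and $N_F$, leaving only the middle term; the numerator of $\eta_2$ therefore equals $-\overline u_{x_2}(1+\overline u\,\dot\varphi/\sqrt{1+u'^2})$. Pairing with $\vec e_3$ and using \eqref{thirdcoorde}, the surviving contributions come from $\partial F/\partial x_1$, which produces $-\overline u_{x_1}u'/(1+u'^2)$, and from $N_F$, which produces $-(1+\overline u\,\dot\varphi/\sqrt{1+u'^2})/\sqrt{1+u'^2}$. Factoring $-1/\sqrt{1+u'^2}$ out of $\eta_3$ and then dividing, the $\sqrt{\,\cdot\,}$-denominator of \eqref{NormalFF} cancels and a single factor of $\sqrt{1+u'^2}$ is promoted from the denominator of $\eta_3$ to the numerator of the quotient. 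A trivial rearrangement yields exactly \eqref{HFF}.

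The only point that could raise a concern is the non-vanishing of $\eta_3$, equivalently of the denominator $1+\overline u_{x_1}u'/\sqrt{1+u'^2}+\overline u\,\dot\varphi/\sqrt{1+u'^2}$ appearing in \eqref{HFF}. However, $\Sigma$ is by hypothesis a vertical graph over $\mathbb{R}^2$, so its Gauss map is never horizontal and $\eta_3$ does not vanish at any point of $\Sigma$. Consequently, the quotient $\eta_2/\eta_3$ is well defined wherever $\widetilde F$ parametrizes $\Sigma$, and the identity \eqref{HFF} holds on $T_{\delta,h}^{\pm}$ for any $\delta$ supplied by Definition \ref{convergenciaG}. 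The argument is purely algebraic and there is no substantive obstacle; the proposition is essentially a bookkeeping step that records the boundary expression of $\eta_2/\eta_3$ needed later to apply the maximum principle to the elliptic equation \eqref{LAP2}.
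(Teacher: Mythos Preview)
Your proposal is correct and follows exactly the approach of the paper: the paper simply states that \eqref{HFF} follows from \eqref{NormalFF}, \eqref{secoorde} and \eqref{thirdcoorde}, and you have carried out precisely that substitution and cancellation in detail. Your added remark that $\eta_3\neq 0$ because $\Sigma$ is a vertical graph is a sensible clarification that the paper leaves implicit.
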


\begin{remark}
\label{curvaturetendszero}
From Theorem \eqref{t2} and  equations \eqref{normalF}-\eqref{e3}, if $\varphi$ is a smooth function such that $e^{-\varphi}$ is not integrable and the $j$-th derivative of $\varphi$ is bounded  for any $j\in\{1,\cdots, k\}$, then $\Sigma$ is closed to $\mathcal{G}^{h}$ in $C^{k}$-topology.
\end{remark}

\section{Compactness Theorem from a Barrier Maximum Principle}
 A fundamental step for the uniqueness of the $[\varphi,\vec{e}_{3}]$-catenary cylinder is his behaviour under translations in the direction $\vec{e}_{2}$. In section 4, we will prove that the translations in the direction $\vec{e}_{2}$ of any $[\varphi,\vec{e}_{3}]$-minimal graph $\Sigma$  converge to some $[\varphi,\vec{e}_{3}]$-catenary cylinder if $\Sigma$ is $\mathcal{C}^{\infty}$-asymptotic to some $\mathcal{G}^{h}$, outside a cylinder. Our first result of this section is a direct application of a compactness theorem of B. White for minimal surfaces in 3-manifolds (Theorem 1.1 of \cite{W2}). Next, we state a Barrier Maximun principle for the blow-up points of the area proved in \cite{W3}. Finally, as a consequence of these results together with the existence of $[\varphi,\vec{e}_{3}]$-Bowls given in \cite{MM}, we prove a compactness result for surfaces $\mathcal{C}^{\infty}$-asymptotic  to $\mathcal{G}^{h}$, outside either a cylinder or a compact set.
\begin{theorem}
\label{CW}
Let $\Omega$ be an open subset of $\mathbb{R}^{3}$. Let $\{\varphi_{n}\}_{n\in\mathbb{N}}$ be a sequence of smooth functions on $\Omega$ converging smoothly to a $\varphi_{\infty}$.  Let $\{\Sigma_{n}\}_{n\in\mathbb{N}}$ be a sequence of properly embedded minimal surface in $\Omega^{\varphi_{n}}$. Suppose also that the area and the genus of the sequence $\{\Sigma_{n}\}$ are uniformly bounded on compacts subsets of $\Omega$. Then, after passing to a subsequence, $\{\Sigma_{n}\}$ converges to a smooth properly embedded minimal $\Sigma_{\infty}$ in $\Omega^{\varphi_{\infty}}$ and the convergence is smooth away from a discrete set $\Gamma$.
 For each connected component $\mathcal{S}$ of $\Sigma_{\infty}$ either,
\begin{enumerate}
\item the convergence to $\mathcal{S}$ is smooth everywhere with multiplicity $1$, or
\item the convergence to $\mathcal{S}$ is smooth with some multiplicity $>1$ away from $\mathcal{S}\cap\Gamma$. In this case, if $\mathcal{S}$ is two-sided, then it must be stable.
\end{enumerate}
If the total curvatures of $\Sigma_{n}$ are bounded by $\beta$, the set $\Gamma$ has at most $\beta/4\pi$ points.
\end{theorem}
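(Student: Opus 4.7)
The plan is to obtain this result as a direct application of White's compactness theorem (Theorem 1.1 in \cite{W2}), once the sequence $\{\Sigma_{n}\}$ is reinterpreted as a sequence of minimal surfaces in a sequence of Riemannian 3-manifolds with varying metrics. Recall from the introduction that, after Ilmanen, a $\varphi$-minimal surface in $\Omega$ is the same as a genuine minimal surface in the conformal Riemannian 3-manifold $(\Omega,g^{\varphi})$, where $g^{\varphi}:=e^{\varphi}\langle\cdot,\cdot\rangle$. Thus each $\Sigma_{n}$ is a properly embedded minimal surface in $(\Omega,g_{n})$ with $g_{n}:=e^{\varphi_{n}}\langle\cdot,\cdot\rangle$, and the claim we want to prove becomes a statement purely about a sequence of minimal surfaces in a varying ambient Riemannian structure.

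The first step is to verify that $g_{n}\to g_{\infty}:=e^{\varphi_{\infty}}\langle\cdot,\cdot\rangle$ smoothly on compact subsets of $\Omega$. This follows directly from the hypothesis $\varphi_{n}\to\varphi_{\infty}$ smoothly, since for any compact $K\subset\Omega$ and any $k\in\mathbb{N}$ the $C^{k}$-norm of $e^{\varphi_{n}}$ on $K$ is controlled by a continuous function of the $C^{k}$-norm of $\varphi_{n}$ on $K$. As a byproduct, the conformal factors $e^{\varphi_{n}}$ are uniformly bounded above and below on $K$ for $n$ large, so area with respect to $g_{n}$, $g_{\infty}$ and the Euclidean metric are comparable up to constants depending only on $K$. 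Consequently the uniform bounds on area and genus on compacts in the hypothesis transfer without modification to the $g_{n}$-geometry, as required by White's statement.

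With these observations in place, White's theorem applies verbatim and yields a subsequence (still denoted $\Sigma_{n}$) converging to a smooth properly embedded minimal surface $\Sigma_{\infty}$ in $(\Omega,g_{\infty})=\Omega^{\varphi_{\infty}}$, with the convergence smooth away from a discrete set $\Gamma$, and with the stated dichotomy on each connected component $\mathcal{S}$: either the convergence is smooth and of multiplicity one on all of $\mathcal{S}$, or it is smooth of higher multiplicity away from $\mathcal{S}\cap\Gamma$, in which case $\mathcal{S}$ is stable whenever it is two-sided. The final quantitative statement $|\Gamma|\leq\beta/4\pi$ under a uniform total-curvature bound is itself part of White's conclusion and requires no separate argument: each blow-up point consumes at least $4\pi$ of curvature, by the standard monotonicity and Gauss--Bonnet reasoning built into the proof of White's theorem.

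The only potential obstacle is conceptual, namely checking that White's hypotheses, which are formulated for a sequence of varying ambient Riemannian metrics, are truly compatible with our setup of a fixed smooth manifold $\Omega$ endowed with a sequence of conformally changed metrics. But a smooth conformal variation of a fixed background metric is in particular a smooth variation of Riemannian metrics, so compatibility is automatic. No substantive calculation is required; the real use of the theorem will come in Section 4, where it is combined with the asymptotic control on $\Sigma$ to extract limits of horizontal translates of the given graph.
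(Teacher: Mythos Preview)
Your proposal is correct and matches the paper's approach exactly: the paper does not give a proof of this theorem but simply presents it as ``a direct application of a compactness theorem of B.\ White for minimal surfaces in 3-manifolds (Theorem~1.1 of \cite{W2}).'' Your write-up just makes explicit the obvious verifications (smooth convergence of the conformal metrics $g_n=e^{\varphi_n}\langle\cdot,\cdot\rangle$, comparability of areas on compacts) needed to invoke White's result in this conformal setting.
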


A crucial assumption in the Theorem of B. White is that the sequence $\{\Sigma_{n}\}$ has uniformly bounded area on compact subset of $\Omega$. Next, we discuss when this hypothesis hold and prove a result of compactness.
\begin{remark}
\label{acoarea}
 In \cite{MMJ}, it is proved that if $\varphi$ is strictly increasing convex smooth function with $\sup\{\ddot{\varphi}-\dot{\varphi}^{2}\}<+\infty$, then any sequence  $\{\Sigma_{n}\}$ of mean convex $[\varphi,\vec{e}_{3}]$-minimal immersion has uniformly bounded intrinsic area on compact subsets of $\Omega$.
\end{remark}

Let us denote by,
\begin{equation*}
\label{blowupset}
\mathcal{Z}:=\{p\in\Omega : \limsup_{n\rightarrow+\infty}\mathcal{A}^{\varphi}(\Sigma_{n}\cap B_{r}(p))=+\infty \text{ for any } r>0\},
\end{equation*}
where $B_{r}(p)$ is an Euclidean ball centered in $p$ of radius $r$. If we show that $\mathcal{Z}=\varnothing$, then $\{\Sigma_{n}\}$ has locally uniformly bounded area. From the Theorems 2.6 and 7.3 in \cite{W3}, White proved that $\mathcal{Z}$ satisfies the following maximum principle as properly embedded minimal surfaces without boundary. 

\begin{theorem}{\rm (Strong Barrier Principle)}
\label{barrierprinciple}
Let $\Omega$ be an open subset of $\mathbb{R}^{3}$.  Let $\{\varphi_{n}\}_{n\in\mathbb{N}}$ be a sequence of smooth function converging smoothly to a smooth functions $\varphi_{\infty}$ and $\{\Sigma_{n}\}_{n\in\mathbb{N}}$ be a sequence of minimal surfaces in $\Omega^{\varphi_{n}}$ such that  the length of $\partial\Sigma_{n}$ are uniformly bounded on compact subsets of $\Omega$ and $D$ be a closed region of $\Omega$ with smooth connected boundary such that $\mathcal{Z}\subset D$ and $\langle H_{\partial D},\nu\rangle\leq 0$ where $\nu$ is the unit normal of $\partial D$ points into $D$ and $H_{\partial D}$ is the mean curvature of the boundary of $D$. If $\mathcal{Z}$ contains any point of $\partial D$, then $\partial D\subset\mathcal{Z}$.
\end{theorem}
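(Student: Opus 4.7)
The plan is to show that $\mathcal{Z}\cap\partial D$ is simultaneously closed and open as a subset of $\partial D$. Since $\partial D$ is connected and $\mathcal{Z}\cap\partial D\neq\varnothing$ by hypothesis, this will force $\partial D\subset\mathcal{Z}$. Closedness is immediate: $\mathcal{Z}$ is defined by a $\limsup$ condition and is therefore closed in $\Omega$, so its intersection with $\partial D$ is closed in $\partial D$.

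For openness, fix $p\in\mathcal{Z}\cap\partial D$. By the very definition of $\mathcal{Z}$, along a subsequence (relabelled $\Sigma_n$) the weighted areas $\mathcal{A}^{\varphi_n}(\Sigma_n\cap B_r(p))$ diverge for every fixed $r>0$. Normalize the associated Radon measures $\mathcal{H}^2|_{\Sigma_n}$ by their masses on a reference ball and invoke varifold compactness in the smoothly converging family of Riemannian backgrounds $\Omega^{\varphi_n}\to\Omega^{\varphi_\infty}$ to extract a non-trivial limit Radon measure which carries the structure of a stationary integral varifold $V$ in $\Omega^{\varphi_\infty}$. By construction $\mathrm{supp}\,V\subset\mathcal{Z}\cap\overline{D}$ and $p\in\mathrm{supp}\,V$.

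The decisive step is then to apply the strong maximum principle for stationary varifolds against a one-sided $C^2$ barrier, in the spirit of Solomon--White. The sign condition $\langle H_{\partial D},\nu\rangle\leq 0$, with $\nu$ pointing into $D$, is exactly the condition making $\partial D$ such a barrier for varifolds supported in $\overline{D}$. Since $p\in\mathrm{supp}\,V\cap\partial D$ provides an interior contact point, this principle forces the connected component of $\partial D$ through $p$ to be entirely contained in $\mathrm{supp}\,V\subset\mathcal{Z}$, giving openness and hence the theorem.

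The main difficulty I anticipate is ensuring that $V$ is stationary for the target metric $\langle\cdot,\cdot\rangle^{\varphi_\infty}$ (rather than only for the Euclidean one) and that the blow-up measure does not concentrate as a point mass at $p$. Both points are delicate because the ambient metric varies with $n$: one must pass the first-variation identity of each $\Sigma_n\subset\Omega^{\varphi_n}$ through to the limit using the smooth convergence $\varphi_n\to\varphi_\infty$ to dominate the error terms, and combine this with a monotonicity formula uniform in $n$ in the non-Euclidean ambient. Both ingredients are precisely what the cited Theorems 2.6 and 7.3 of \cite{W3} supply.
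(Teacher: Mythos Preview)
The paper does not prove this statement at all: Theorem~\ref{barrierprinciple} is quoted as a result of White, with the sentence immediately preceding it reading ``From the Theorems 2.6 and 7.3 in \cite{W3}, White proved that $\mathcal{Z}$ satisfies the following maximum principle\ldots''. There is therefore no proof in the paper to compare your proposal against; the authors simply invoke the theorem as a black box in the proof of Theorem~\ref{conv}.

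Your sketch is a plausible outline of the mechanism behind White's result (closed/open argument on $\partial D$, extraction of a limiting varifold supported in $\mathcal{Z}$, Solomon--White type strong maximum principle at the contact point), and you even acknowledge at the end that the delicate steps are exactly what \cite{W3} provides. That said, one step in your outline is genuinely problematic as written: normalizing $\mathcal{H}^2|_{\Sigma_n}$ by its total mass on a reference ball and passing to a Radon-measure limit does not, by itself, yield a \emph{stationary integral} varifold---the limit could be a Dirac mass at $p$ or a non-rectifiable measure, and rescaling the mass destroys the first-variation identity. White's actual argument in \cite{W3} does not proceed by such a normalization; rather, it works directly with the sequence and uses a monotonicity-type slicing to control how area can blow up, showing that the blow-up set itself behaves like a minimal set for the limit metric. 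If you want to reconstruct the proof rather than cite it, that is the step requiring real work; otherwise, for the purposes of this paper, a citation is all that is expected.
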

As a consequence of the result above, we give the following compactness theorem showing that the set $\mathcal{Z}=\varnothing$.
\begin{theorem}
\label{conv}
Let  $\varphi:]a,+\infty[\rightarrow ]b,c[$ $a,b\in\mathbb{R}\cup\{-\infty\}$ and $c\in\mathbb{R}\cup\{+\infty\}$ be a strictly increasing convex diffeomorphism and $\Sigma$ be a connected properly embedded $[\varphi,\vec{e}_{3}]$-minimal minimal surface with locally bounded genus $\mathcal{C}^{\infty}$-asymptotic to $\mathcal{G}^{h}$, outside a cylinder, for some $h\in ]a,+\infty[$. Suppose that $\{b_{n}\}_{n\in\mathbb{N}}$ is a divergent sequence of real numbers and consider the sequence of $[\varphi,\vec{e}_{3}]$-minimal surfaces $\{\Sigma_{n}=\Sigma-(0,b_{n},0)\}_{n\in\mathbb{N}}$ . Then, after passing to a subsequence, $\Sigma_{n}$ converge smoothly with multiplicity one to a properly embedded connected $[\varphi,\vec{e}_{3}]$-minimal $\Sigma_{\infty}$ which has the same asymptotic behaviour that $\Sigma$.
\end{theorem}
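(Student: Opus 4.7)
The plan is to reduce to B.~White's compactness theorem (Theorem \ref{CW}) applied to $\{\Sigma_n\}$. First observe that every $\Sigma_n$ is still a properly embedded $[\varphi,\vec e_3]$-minimal surface, since $\varphi$ depends only on $x_3$ and so translations in the $\vec e_2$ direction are isometries of the conformal metric $\langle\cdot,\cdot\rangle^{\varphi}$. This same invariance makes the solid cylinder $\mathfrak{c}$ in the asymptotic description of $\Sigma$ itself $\vec e_2$-invariant, so each $\Sigma_n$ is $\mathcal C^{\infty}$-asymptotic to the same $\mathcal G^{h}$ outside the same $\mathfrak{c}$: the translated graphing function $\bar u_n(x_1,x_2)=\bar u(x_1,x_2+b_n)$ inherits every sup-bound of $\bar u$ on $T^{\pm}_{\delta,h}$, because those sups are already taken over $x_2\in\mathbb R$. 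Local genus bounds for $\{\Sigma_n\}$ transfer from $\Sigma$ in the same way: outside $\mathfrak{c}$ each $\Sigma_n$ is graphical, hence genus zero there, while inside $\mathfrak{c}\cap K$ the genus is that of a translate of a compact piece of the locally bounded-genus surface $\Sigma$.

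The main obstacle is the uniform local area bound required by Theorem \ref{CW}, since Remark \ref{acoarea} does not apply directly to the weights allowed here. I would obtain it by showing that the blow-up set $\mathcal Z$ is empty, using Theorem \ref{barrierprinciple} together with the $[\varphi,\vec e_3]$-Bowl family from \cite{MM}. Outside $\mathfrak{c}$ there is nothing to prove, because the uniform $\mathcal C^{\infty}$-graphical description of every $\Sigma_n$ over $\mathcal G^{h}$ immediately bounds the area on compact sets; hence it suffices to rule out blow-up points inside $\mathfrak{c}$. The Bowls provide a family of rotationally symmetric, mean-convex $[\varphi,\vec e_3]$-minimal surfaces that can be positioned to bound closed regions satisfying the sign condition $\langle H_{\partial D},\nu\rangle\le 0$ of Theorem \ref{barrierprinciple}. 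Assuming some $p_0\in\mathcal Z\cap\mathfrak{c}$ and sliding a Bowl-bounded region $D$ until $\partial D$ first touches $\mathcal Z$ forces $\partial D\subset\mathcal Z$ by the principle; but a Bowl extends unboundedly, and in particular leaves the horizontal base of $\mathfrak{c}$ into the region where the graphical description of $\Sigma_n$ already precludes area blow-up. This contradiction yields $\mathcal Z=\varnothing$.

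With locally bounded area and genus in hand, Theorem \ref{CW} yields a subsequence converging smoothly, away from a discrete $\Gamma$, to a properly embedded $[\varphi,\vec e_3]$-minimal $\Sigma_\infty$. To upgrade to multiplicity one, I would use the asymptotic description once more: on each tube $T^{\pm}_{\delta,h}$ the graphing functions $\bar u_n$ are $\mathcal C^{\infty}$-bounded with every derivative below the fixed $\varepsilon$, so Arzel\`a--Ascoli provides a $\mathcal C^{\infty}$-limit $\bar u_\infty$ satisfying the same bounds. Hence outside $\mathfrak{c}$ the convergence is smooth with multiplicity one, and the dichotomy in Theorem \ref{CW} propagates multiplicity one to every component of $\Sigma_\infty$ meeting the asymptotic region. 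This simultaneously endows $\Sigma_\infty$ with the same $\mathcal C^{\infty}$-asymptotic description to $\mathcal G^{h}$ outside $\mathfrak{c}$. Connectedness of $\Sigma_\infty$ follows because the two asymptotic graphical ends are joined through the interior of each $\Sigma_n$, and smooth multiplicity-one convergence combined with properness prevents the connecting piece from splitting off or escaping. The delicate step throughout is the barrier argument of the second paragraph: selecting, within the Bowl family, a surface whose unboundedness is genuinely incompatible with the inclusion $\partial D\subset\mathcal Z$.
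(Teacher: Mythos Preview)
Your proposal is correct and follows essentially the same strategy as the paper's proof: obtain uniform local area bounds by combining the graphical asymptotic description with White's Strong Barrier Principle applied against translated $[\varphi,\vec e_3]$-Bowls, then invoke White's compactness theorem and use the graphical convergence on the wings to extract multiplicity one, the asymptotic behaviour of $\Sigma_\infty$, and connectedness. The only cosmetic difference is that the paper decomposes each $\Sigma_n$ by a height level $\{x_3=z\}$ rather than by the cylinder $\mathfrak{c}$, and therefore explicitly verifies the boundary-length hypothesis of Theorem~\ref{barrierprinciple} for the truncated pieces $\Sigma_n^-(z)$; since you apply the principle to the boundaryless $\Sigma_n$, that check is unnecessary in your version.
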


\begin{proof}
By assumption our surface $\Sigma$ is $\mathcal{C}^{\infty}$-asymptotic to $\mathcal{G}^{h}$,  outside a cylinder $\mathfrak{c}$. Up to rigid motions, we can consider $\mathfrak{c}$ such that $$\mathfrak{c}=\{(x_{1},x_{2},x_{3})\in\mathbb{R}^{3}: x_{1}^{2}+x_{3}^{2}\leq r_{0}^{2}\}.$$
From Definition \ref{convergenciaG}, each $\Sigma_{n}\backslash\mathfrak{c}$ consists in two connected components $\Sigma_{n,1},\Sigma_{n,2}$ $\mathcal{C}^{\infty}$-asymptotic to $\mathcal{G}^{h}_{+}(0)$ and $\mathcal{G}^{h}_{-}(0)$, respectively. For an arbitrary $z\in\mathbb{R}$ denote by $$\mathcal{R}_{z}^{+}=\{p\in\mathbb{R}^{3}: \langle p,\vec{e}_{3}\rangle\geq z\} \text{ and } \mathcal{R}^{-}_{z}=\mathbb{R}^{3}\backslash\mathcal{R}^{+}_{z}.$$
Split each surface $\Sigma_{n}$ of the surface into the parts
$$\Sigma_{n,k}^{+}(z)=\Sigma_{n,k}\cap\mathcal{R}_{z}^{+} \ \ \text{with} \ \ k\in\{1,2\}\ \ \text{and} \ \ \Sigma_{n}^{-}(z)=\Sigma_{n}\backslash\left(\bigcup_{k=1}^{2}\Sigma_{n,k}^{+}(z)\right).$$

\textbf{Claim 1.} There exists $z_{1}$ large enough such that for any $z\geq z_{1}$ the sequence $\{\Sigma_{n,k}^{+}(z)\}_{n\in\mathbb{N}}$ have uniformly bounded area on compact subsets of $\Omega$ with respect the Ilmanen's metric.
\begin{proof}[Proof of the Claim 1.]
We only argue on the case $k=1$ since the case $k=2$ is analogous. Let $\mathcal{K}$ be a compact subset of $\mathbb{R}^{3}$ and $B(0,r)$ the ball of radius $r$ centered at the origin containing $\mathcal{K}$. Denote by $\mathcal{U}_{n,z}$ the projection of $\Sigma_{n,1}^{+}(z)\cap\mathcal{K}$ to $]-\Lambda_{h},\Lambda_{h}[\times\mathbb{R}$. From the definition \ref{convergenciaG} and taking $z_{1}$ large enough, we know that for any $z\geq z_{1}$ and any $\varepsilon>0$ there exists $\delta$ (depending only of $\varepsilon$) such that we can parametrize $\Sigma_{n,1}^{+}(z)\cap\mathcal{K}$ by $\widetilde{F}_{n}$ over $\mathcal{U}_{n,z}\cap T_{\delta,h}^{+}$. Hence, the area on this compact subset is given by,
\begin {equation*}
\label{areaK}
\mathcal{A}^{\varphi}(\Sigma_{n,1}^{+}(z)\cap\mathcal{K})=\int_{\mathcal{U}_{n,z}\cap T_{\delta,h}^{+}}\, e^{\varphi(\langle\widetilde{F}_{n}(x_{1},x_{2}),\vec{e}_{3}\rangle)}\bigg\vert\frac{\partial\widetilde{F}_{n}}{\partial x_{1}}\wedge \frac{\partial\widetilde{F}_{n}}{\partial x_{2}}\bigg\vert\, dx_{1}\, dx_{2}.
\end{equation*}
Consequently, from the monoticity of $\varphi$, the Theorem \ref{t2} and the expression \eqref{v3}, there must be a constant  $C(R,\varepsilon)>0$ (depending only of $R$ and $\varepsilon$) such that
\begin{equation*}
\label{cot1}
\mathcal{A}^{\varphi}(\Sigma_{n,1}^{+}(z)\cap\mathcal{K})\leq e^{\varphi(R+\varepsilon)}C(R,\varepsilon)\mathcal{A}(\mathcal{U}_{n,z})\leq e^{\varphi(R+\varepsilon)}C(R,\varepsilon)\mathcal{A}(\mathcal{K}).
\end{equation*}

\end{proof}
\noindent\textbf{Claim 2.} There exists $z_{2}\geq z_{1}$ such that for any $z\geq z_{2}$ the sequence of surfaces $\{\Sigma_{n}^{-}(z)\}_{n\in\mathbb{N}}$ have uniformly bounded area on compact sets of $\Omega$ with respect to the Ilmanen's metric.
\begin{proof}[Proof of the Claim 2.]
Proceeding as before, the sequence $\{\partial\Sigma_{n}^{-}(z)\}$ has uniformly bounded length on compact sets. Notice that each $\partial\Sigma_{n}^{-}(z)$ has two connected components $\partial\Sigma_{n,k}^{-}(z)$ with $k\in\{1,2\}$. Again, we will only argue on $k=1$ since the same reasoning works for $k=2$. Fix a compact subset $\mathcal{K}$ of $\mathbb{R}^{3}$ and $B(0,r)$ the ball of radius $r$ centered in the origin containing $\mathcal{K}$. Taking $z_{2}\geq z_{1}$ large enough, we get that for any $z\geq z_{2}$ and any $\varepsilon>0$ there exists $\delta$ (depending only of $\varepsilon$) such that $\partial\Sigma_{n,1}^{-}(z)\cap\mathcal{K}$ can be represented as a planar curve $\gamma_{n}:I_{n}\subset\mathbb{R}\rightarrow\Pi(r_{n})$ given by $\gamma_{n}(t)=\widetilde{F}_{n}(r_{n},t)$ with $r_{n}\in]\Lambda_{h}-\delta,\Lambda_{h}[$, $I_{n}$ stands the projection of $\partial\Sigma_{n,1}^{-}(z)\cap\mathcal{K}$ to $\{r_{n}\}\times\mathbb{R}$ and $\widetilde{F}_{n}$ defined in \ref{convergenciaG}. Hence, from the equation \eqref{v2}, we can estimate the length as follows
$$\int_{I_{n}}\, e^{\varphi(\langle\gamma_{n}(t),\vec{e}_{3}\rangle)}\Vert\gamma_{n} '(t)\Vert\, dt \leq e^{\varphi (R+\varepsilon)}\int_{I_{n}}\Vert\gamma_{n} '(t)\Vert\, dt\leq  C(\varepsilon,r)e^{\varphi (R+\varepsilon)},$$
where $C(\varepsilon,r)$ is a positive constant that only depends of $\varepsilon$ and $r$.

On the other hand, from the Claim 1, we get that $\mathcal{Z}$ is contained within a horizontal cylinder $\mathfrak{c}'$ of radius greater or equal to $z_{2}$. Notice that, the third coordinate $x_{3}$ is bounded in $\mathcal{Z}$.  From \cite{MM},  we can assure the existence of a $[\varphi,\vec{e}_{3}]$-Bowl $\mathcal{B}$ such that $\mathcal{B}\cap\mathfrak{c}'=\varnothing$. Translating $\mathcal{B}$ in the direction $-\vec{e}_{3}$, there exists a first $l>0$ such that $\mathcal{B}-l\vec{e}_{3}$ has a tangency point $p$  of contact with $\mathcal{Z}$. Thus,  from the estimate above, we can apply the Theorem \ref{barrierprinciple} to showing that $\mathcal{B}-l\vec{e}_{3}$ is contained in $\mathfrak{c}'$ which contradicts the asymptotic behaviour of $\mathcal{B}$ given in $\cite{MM}$. It would like pointing out that the translated Bowl is a $[\varphi_{l},\vec{e}_{3}]$-minimal graph, in particular mean convex,  with $\varphi_{l}(x_{3})=\varphi (x_{3}+l)$ .
\end{proof}

\begin{center}
\includegraphics[scale=0.2]{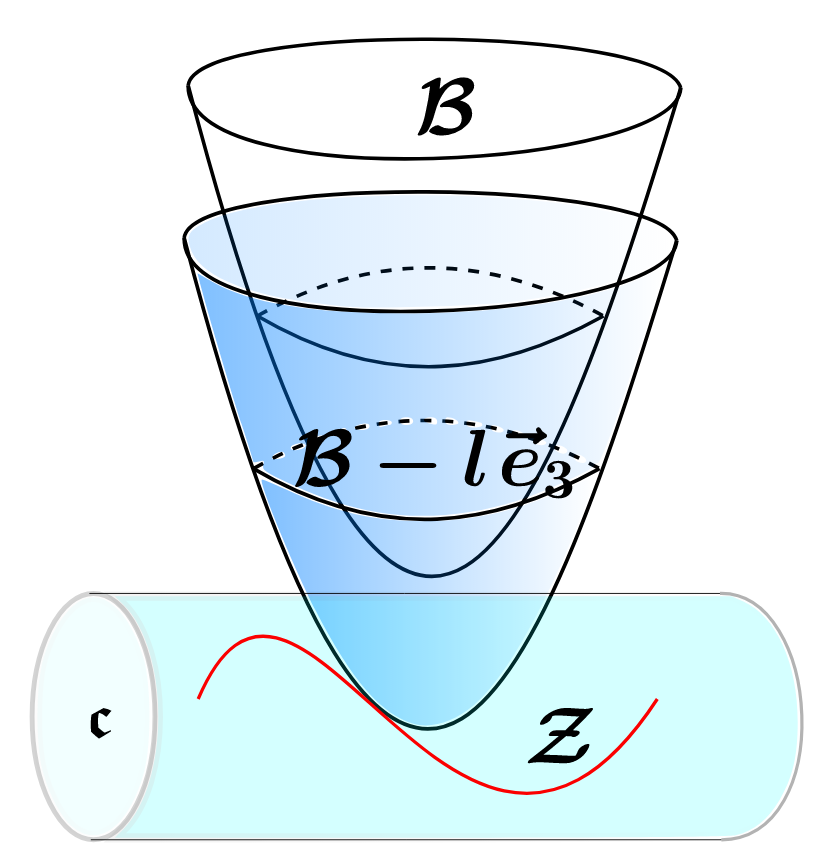}
\end{center}

From the Claims 1 and 2, the sequence $\Sigma_{n}$ have uniformly bounded area on compact subsets of $\mathbb{R}^{3}$ with respect to the Ilmanen's metric. Consequently, from the Theorem \ref{CW}, $\{\Sigma_{n}\}_{n\in\mathbb{N}}$ converges to a smooth properly embedded $[\varphi,\vec{e}_{3}]$-minimal $\Sigma_{\infty}$. Notice that the limit $\Sigma_{\infty}\neq\varnothing$ because $\{\Sigma_{n}\}$ have accumulation points due to their asymptotic behaviour. Since each $\Sigma_{n,k}^{+}(z)$ is a graph over $\mathcal{G}^{h}$ and $\Sigma_{n}$ is connected, we deduce that the multiplicity is one everywhere and so, the convergence is smooth. Moreover, observe that each component of $\Sigma_{\infty}\cap\mathcal{R}_{z}^{+}$ can be represented as the graph of a smooth function $\overline{u}_{\infty}$ which is the limit of a sequence of graphs $\overline{u}_{n}=\overline{u}(s-b_{n},t)$. Thus, $\Sigma_{\infty}$ has the same asymptotic behaviour that $\Sigma$. Finally, $\Sigma_{\infty}$ must be connected. Otherwise, there should exists a properly embedded connected component $\mathcal{S}_{\infty}$ lying inside $\mathfrak{c}$ getting to contradiction since the third coordinate of $\Sigma$ must be bounded.
\end{proof}

\begin{remark}
From the paper \cite{MM},  the hypothesis of convexity for $\varphi$ is crucial for the existence of the $[\varphi,\vec{e}_{3}]$-Bowls.
\end{remark}

\section{Proof of the main result}

In this section, we prove the uniqueness result for the $[\varphi,\vec{e}_{3}]$-catenary cylinders by their asymptotic behaviour. The main tools for proving these theorems are the moving plane method of Alexandrov \cite{Al}, the Hopf's maximum principle and a strong maximum principle for elliptic equations \cite{Hopf}. The goal will be to prove that  the angle function $\eta_{2}$ vanishes everywhere. Hence, our graph $\Sigma$ will be invariant by translations in the direction $\vec{e}_{2}$ and so, it must be a $[\varphi,\vec{e}_{3}]$-catenary cylinder. 

\begin{lemma} 
\label{l1}
Let $\varphi: ]a,+\infty[\rightarrow ]b,c[$ , $a,b\in\mathbb{R}\cup\{-\infty\}$ , $c\in\mathbb{R}\cup\{+\infty\}$ be a strictly increasing smooth function such that $e^{-\varphi}\in L^{1}(]a,+\infty[)$. If $\Sigma$ is a $[\varphi,\vec{e}_{3}]$-minimal immersion $\mathcal{C}^{\infty}$-asymptotic to $[\varphi,\vec{e}_{3}]$-catenary cylinder $\mathcal{G}^{h}$, outside a cylinder, for some $h\in ]a,+\infty[$, then $\Sigma$ is strictly contained in the slab $]-\Lambda_{h},\Lambda_{h}[\times\mathbb{R}^{2}$.
\end{lemma}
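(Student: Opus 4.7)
The central tool is equation \eqref{laplacianox1}, which says that $x_1|_\Sigma$ is a $\varphi$-harmonic function: $\Delta^\varphi x_1 = 0$. A second key observation is that every vertical plane $\Pi(t)$ is itself $[\varphi,\vec{e}_3]$-minimal ($H = 0$ and $\langle N, \vec{e}_3\rangle = 0$ together trivially satisfy \eqref{defphimin}), so such a plane may serve as a barrier in a tangency argument.

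I would first prove $\sup_\Sigma x_1 \leq \Lambda_h$ (the bound $\inf_\Sigma x_1 \geq -\Lambda_h$ being symmetric) by sliding a vertical plane. The $\mathcal{C}^\infty$-asymptotic hypothesis gives, for every $\varepsilon > 0$, a normal graph parametrization $\widetilde F = F + \overline u\, N_F$ of the right and left ends of $\Sigma$ over strips $T^{\pm}_{\delta,h}$, with $|\overline u| < \varepsilon$. Reading off the first coordinate $x_1 + \overline u\, u'/\sqrt{1+u'^{\,2}}$ of $\widetilde F$, the asymptotic portion of $\Sigma$ satisfies $|x_1| \leq \Lambda_h + \varepsilon$, while $\Sigma \cap \mathfrak c$ is bounded; hence $\sup_\Sigma x_1$ is finite, and I may start $\Pi(t)$ far to the right so it is disjoint from $\Sigma$. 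Decreasing $t$, if the plane first meets $\Sigma$ at some value $t_0 > \Lambda_h$ at an interior tangency $p_0$, then locally $\Sigma$ lies on one side of $\Pi(t_0)$ and is tangent to it. Both surfaces being $[\varphi,\vec{e}_3]$-minimal, the tangential maximum principle forces $\Sigma = \Pi(t_0)$ in a neighborhood of $p_0$, and by unique continuation globally, contradicting the asymptotic hypothesis. If instead $t_0 > \Lambda_h$ is only approached along a divergent sequence $p_n \in \Sigma$, properness of $\Sigma$ together with the asymptotic description forces the $p_n$ to eventually lie in a strip $T^{\pm}_{\delta,h}$ for arbitrarily small $\varepsilon$, where $|x_1| \leq \Lambda_h + \varepsilon$; hence $\limsup x_1(p_n) \leq \Lambda_h$, a contradiction.

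For strictness, suppose $x_1(p_0) = \Lambda_h$ for some $p_0 \in \Sigma$. Then the $\varphi$-harmonic function $x_1$ attains its maximum at an interior point of the connected surface $\Sigma$, so Hopf's strong maximum principle forces $x_1 \equiv \Lambda_h$, which is absurd since $\Sigma$ has the non-planar asymptotic model $\mathcal G^h$. Thus $x_1 < \Lambda_h$ everywhere on $\Sigma$, and the symmetric argument produces $x_1 > -\Lambda_h$. \textbf{The main obstacle} I anticipate is handling the asymptotically-approached (non-attained) case in the first stage: one must rule out that a maximizing sequence escapes to infinity in $\Sigma$ through a region not covered by the asymptotic parametrization. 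This is where the $\mathcal{C}^\infty$-asymptotic control, properness of $\Sigma$, and a careful choice of the strip width $\delta(\varepsilon)$ must be combined to conclude that such a sequence must eventually enter $T^{\pm}_{\delta,h}$ and inherit the bound $|x_1| \leq \Lambda_h + \varepsilon$.
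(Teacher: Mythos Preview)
Your proposal is correct and rests on the same two ingredients the paper uses: the $\varphi$-harmonicity of $x_1$ from \eqref{laplacianox1} and the Hopf maximum principle, combined with the asymptotic description to control what happens at infinity. The only difference is packaging. You slide $\Pi(t)$ in from the right and split into an attained/not-attained dichotomy for $\sup_\Sigma x_1$, whereas the paper absorbs both cases into a single stroke: assuming $\lambda=\sup_\Sigma x_1>\Lambda_h$, it restricts to the subdomain $\mathcal{S}_{h,\lambda}=\Sigma_+\!\bigl(\tfrac{\Lambda_h+\lambda}{2}\bigr)$, observes (from the asymptotic behaviour) that $x_1$ must have an interior local maximum there, and then the maximum principle for \eqref{laplacianox1} gives $\lambda=\sup_{\mathcal{S}_{h,\lambda}}x_1=\sup_{\partial\mathcal{S}_{h,\lambda}}x_1=\tfrac{\Lambda_h+\lambda}{2}$, an immediate contradiction. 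This cutoff trick is exactly what resolves your ``main obstacle'' in one line, so you may want to adopt it. For the strict inequality both arguments coincide: your strong maximum principle for $x_1$ on $\Sigma$ is equivalent to the paper's tangency comparison with $\Pi(\Lambda_h)$.
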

\begin{proof}
\textit{Argue by contradiction}. Let $x_{1}$ be the first coordinate of $\Sigma$ and assume that $\lambda=\sup_{\Sigma}\{x_{1}\}>\Lambda_{h}$. Consider the following subset of $\Sigma$, $$
\mathcal{S}_{h,\lambda}=\Sigma_{+}\left(\frac{\Lambda_{h}}{2}+\frac{\lambda}{2}\right) \ \ \text{with} \ \ \partial\mathcal{S}_{h,\lambda}=\Sigma\cap\Pi\left(\frac{\Lambda_{h}}{2}+\frac{\lambda}{2}\right).$$
The asymptotic behaviour of $\Sigma$ with respect to $\mathcal{G}^{h}_{+}(0)$ implies that there must exist a local maximum in the interior of $\mathcal{S}_{h,\lambda}$. From the equation \ref{laplacianox1}, we get that
$$\sup_{\mathcal{S}_{h,\lambda}}\{x_{1}\}=\sup_{\partial\mathcal{S}_{h,\lambda}}\{x_{1}\}.$$
Hence, there must be a point $p\in\partial\mathcal{S}_{h,\lambda}$ such that 
$$\frac{\Lambda_{h}}{2}+\frac{\lambda}{2}=x_{1}(p)\geq \lambda,$$
getting to a contradiction since $\lambda>\Lambda_{h}$. On the other hand, if the equality holds, comparing the vertical plane $\Pi(\Lambda_{h})$ with $\Sigma$ we have again a contradiction applying the Hopf's maximum principle \cite{Hopf}. Therefore, $\sup_{\Sigma}\{x_{1}\}<\Lambda_{h}$. 
\

Similarly, we can prove that $\inf_{\Sigma}\{x_{1}\}>-\Lambda_{h}$ thanks to the asymptotic behaviour of $\Sigma$ with respect to $\mathcal{G}^{h}_{-}(0)$.
\end{proof}

\begin{lemma}
\label{l2}
Let $\varphi: ]a,+\infty[\rightarrow ]b,c[$ , $a,b\in\mathbb{R}\cup\{-\infty\}$ , $c\in\mathbb{R}\cup\{+\infty\}$ be a strictly increasing smooth function such that $e^{-\varphi}\in L^{1}(]a,+\infty[)$. If $\Sigma$ is a $[\varphi,\vec{e}_{3}]$-minimal immersion $\mathcal{C}^{\infty}$-asymptotic to $[\varphi,\vec{e}_{3}]$-catenary cylinder $\mathcal{G}^{h}$, outside a cylinder, for some $h\in]a,+\infty[$ then $\Sigma$ is symmetric with respect to the plane $\Pi(0)$.
\end{lemma}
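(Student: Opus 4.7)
The plan is to run the Alexandrov moving plane method with the one-parameter family of vertical planes $\Pi(t)=\{x_1=t\}$, combining Lemma \ref{l1}, the asymptotic hypothesis, the Hopf maximum principle, and the compactness result of Theorem \ref{conv}, to force the reflection of $\Sigma$ across $\Pi(0)$ to coincide with $\Sigma$. Since $\Sigma$ is a vertical graph $x_3=u(x_1,x_2)$ and, by Lemma \ref{l1}, is strictly contained in the slab $]-\Lambda_h,\Lambda_h[\times\mathbb{R}^2$, the reflection of $\Sigma_+(t)$ across $\Pi(t)$ is the graph of $v_t(x_1,x_2):=u(2t-x_1,x_2)$ over a domain contained in $]2t-\Lambda_h,t[\times\mathbb{R}$. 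Equation \eqref{ecuaciongrafos} is invariant under $x_1\mapsto 2t-x_1$, so $u$ and $v_t$ solve the same quasilinear elliptic equation on their common domain.

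For the starting position I would use the $\mathcal{C}^\infty$-asymptotic hypothesis. The catenary profile $u^h$ is strictly convex, even, with minimum at $x_1=0$, so for every $t\in\,]0,\Lambda_h[\,$ and $x_1\in\,]2t-\Lambda_h,t[\,$ one has $|x_1|<|2t-x_1|$ and hence $u^h(x_1)<u^h(2t-x_1)$; equivalently, the reflection of the right half of $\mathcal{G}^h$ lies strictly above its left half. The $\mathcal{C}^\infty$-closeness of $\Sigma$ to $\mathcal{G}^h$ outside $\mathfrak{c}$ then produces $\delta_0>0$ with $v_t>u$ on the full overlap domain for every $t\in\,]\Lambda_h-\delta_0,\Lambda_h[\,$. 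Consequently the set
$$I:=\{\,t\in[0,\Lambda_h[\,:\,v_s\ge u\text{ on the overlap for every }s\in[t,\Lambda_h[\,\}$$
is non-empty, and $t^*:=\inf I$ is well-defined in $[0,\Lambda_h-\delta_0]$.

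The heart of the proof is to show $t^*\le 0$. By continuity, $v_{t^*}\ge u$ on the overlap, but a first contact must occur: either \emph{(a)} an interior tangency at some $p$ with $v_{t^*}(p)=u(p)$, \emph{(b)} a tangency on the fixed set $\Sigma\cap\Pi(t^*)$, or \emph{(c)} a contact only realized in the limit $|x_2|\to\infty$. In case \emph{(a)}, put $w:=v_{t^*}-u\ge 0$; subtracting the two instances of \eqref{ecuaciongrafos} gives a linear homogeneous second-order elliptic operator annihilating $w$, with $w(p)=0$. Hopf's strong maximum principle forces $w\equiv 0$ near $p$ and, by connectedness of $\Sigma$, globally. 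Then $\Sigma$ is symmetric about $\Pi(t^*)$, placing its left asymptote at $x_1=2t^*-\Lambda_h>-\Lambda_h$ and contradicting the fact that the left branch of $\Sigma$ is asymptotic to $\mathcal{G}^h_-(0)$, whose asymptote is $x_1=-\Lambda_h$. Case \emph{(b)} is closed by the Hopf boundary-point lemma applied to $w$ at the tangency on $\Pi(t^*)$ — where the tangent plane is vertical and the normals of $\Sigma_+^*(t^*)$ and $\Sigma_-(t^*)$ agree — giving once more $w\equiv 0$ and the same contradiction. In case \emph{(c)}, I would pick a near-contact sequence $p_n$ with $y_n:=\langle p_n,\vec{e}_2\rangle\to\infty$ and apply Theorem \ref{conv} to the translates $\Sigma-(0,y_n,0)$: along a subsequence one obtains a connected properly embedded $[\varphi,\vec{e}_3]$-minimal graph $\Sigma_\infty$ with the same asymptotic behavior as $\Sigma$ on which the contact point persists at finite distance, reducing to case \emph{(a)} or \emph{(b)}.

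The conclusion $t^*\le 0$ yields $v_0\ge u$ on the overlap. Repeating the argument with planes moving from $x_1=-\Lambda_h$ to the right yields the reverse inequality $u\ge v_0$, so $u=v_0$, which is symmetry of $\Sigma$ about $\Pi(0)$. I expect the main obstacle to be case \emph{(c)}: non-compactness of $\Sigma$ along $\vec{e}_2$ allows the first contact point to escape to infinity, where the standard Alexandrov argument breaks down. This is exactly the scenario Theorem \ref{conv} is tailored for, converting an infinite-distance contact into a finite one on a limiting surface where Hopf's principle can be applied directly.
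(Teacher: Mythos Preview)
Your overall strategy---Alexandrov moving planes with the family $\Pi(t)$, starting from the asymptotic data and driving the critical parameter to $0$---is the same as the paper's. The paper likewise defines an admissible set $\mathcal{A}\subset[0,\Lambda_h[$, shows it is non-empty via the asymptotic closeness to $\mathcal{G}^h$, and argues that $\inf\mathcal{A}=0$; the symmetric run from the left then gives the conclusion.

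The substantive difference is in how the non-compactness along $\vec{e}_2$ is handled (your case~(c)). You pass to a subsequential limit of translates via Theorem~\ref{conv} and reduce to a finite contact. The paper does \emph{not} invoke Theorem~\ref{conv} here; instead it argues openness of $\mathcal{A}$ directly, citing Claim~3 of \cite{MSHS1} and Claim~3 of \cite{MSHS2}. The point of those arguments is that the $\mathcal{C}^\infty$-asymptotic hypothesis gives uniform-in-$x_2$ control on both wings outside the cylinder, so any failure of $\Sigma_+^*(t)\ge\Sigma_-(t)$ must occur in a region where standard Hopf-type reasoning and a perturbation in $t$ apply; one then shows $s_0-\varepsilon_1\in\mathcal{A}$, contradicting minimality. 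No compactness theorem is needed.

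This difference has consequences for the hypotheses. Your route through Theorem~\ref{conv} requires $\varphi$ convex and $\Sigma$ properly embedded with locally bounded genus, and you also begin by assuming $\Sigma$ is a vertical graph; none of these appear in the statement of Lemma~\ref{l2}, which is phrased for a $[\varphi,\vec{e}_3]$-minimal \emph{immersion} with $\varphi$ merely strictly increasing. All three extra hypotheses do hold in the setting of Theorem~\ref{uni}, so your argument is adequate for the main application, but as written it does not establish Lemma~\ref{l2} in the generality stated. If you want to match the lemma, replace the appeal to Theorem~\ref{conv} by the localization/openness argument as in \cite{MSHS1,MSHS2}, and work with graphs over $\Pi(0)$ (which is what $\mathcal{A}$ encodes) rather than assuming a global vertical graph.
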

\begin{proof}
The main tool for proving the Lemma is the Alexandrov's method of moving planes of \cite{Al} together with the arguments of \cite{MSHS1} and \cite{MSHS2}.  Fix $t>0$ and consider the reflection of $\Sigma_{+}(t)$ with respect to $\Pi(t)$ 
$$\Sigma_{+}^{*}(t)=\{(2t-x_{1},x_{2},x_{3})\in\mathbb{R}^{3}: (x_{1},x_{2},x_{3})\in\Sigma_{+}(t)\}.$$ 
From the previous Lemma \ref{l1}, let us define the following set
\begin{equation*}
\label{conjuntoA}
\mathcal{A}=\{t\in [0,\Lambda_{h}[ : \Sigma_{+}(t) \text{ graph over } \Pi(0) \text{ and } \Sigma_{+}^{*}(t)\geq\Sigma_{-}(t)\},
\end{equation*}
where $\Sigma_{+}^{*}(t)\geq\Sigma_{-}(t)$ means that 
$$\inf [x_{1}(\pi_{t}^{-1}(x_{1},x_{2},x_{3})\cap\Sigma_{+}^{*}(t))]\geq \sup[x_{1}(\pi_{t}^{-1}(x_{1},x_{2},x_{3})\cap\Sigma_{-}(t))],$$
for any point $(x_{1},x_{2},x_{3})\in\Pi(t)$ such that $\pi_{t}^{-1}(x_{1},x_{2},x_{3})\cap\Sigma_{+}^{*}(t))\neq\varnothing$ and $\pi_{t}^{-1}(x_{1},x_{2},x_{3})\cap\Sigma_{-}(t))\neq\varnothing$ where $\pi_{t}$ is the projection onto $\Pi(t)$ given by $\pi_{t}((x_{1},x_{2},x_{3}))=(t,x_{2},x_{3})$. We will prove that $0\in\mathcal{A}$. In this case, we have that $\Sigma_{+}^{*}(0)\geq\Sigma_{-}(0)$ and by a symmetric argument, we can show that $\Sigma_{-}(0)\leq\Sigma_{+}^{*}(0)$ and so, $\Sigma_{+}^{*}(0)=\Sigma_{-}(0)$.
\begin{center}
\includegraphics[scale=0.18]{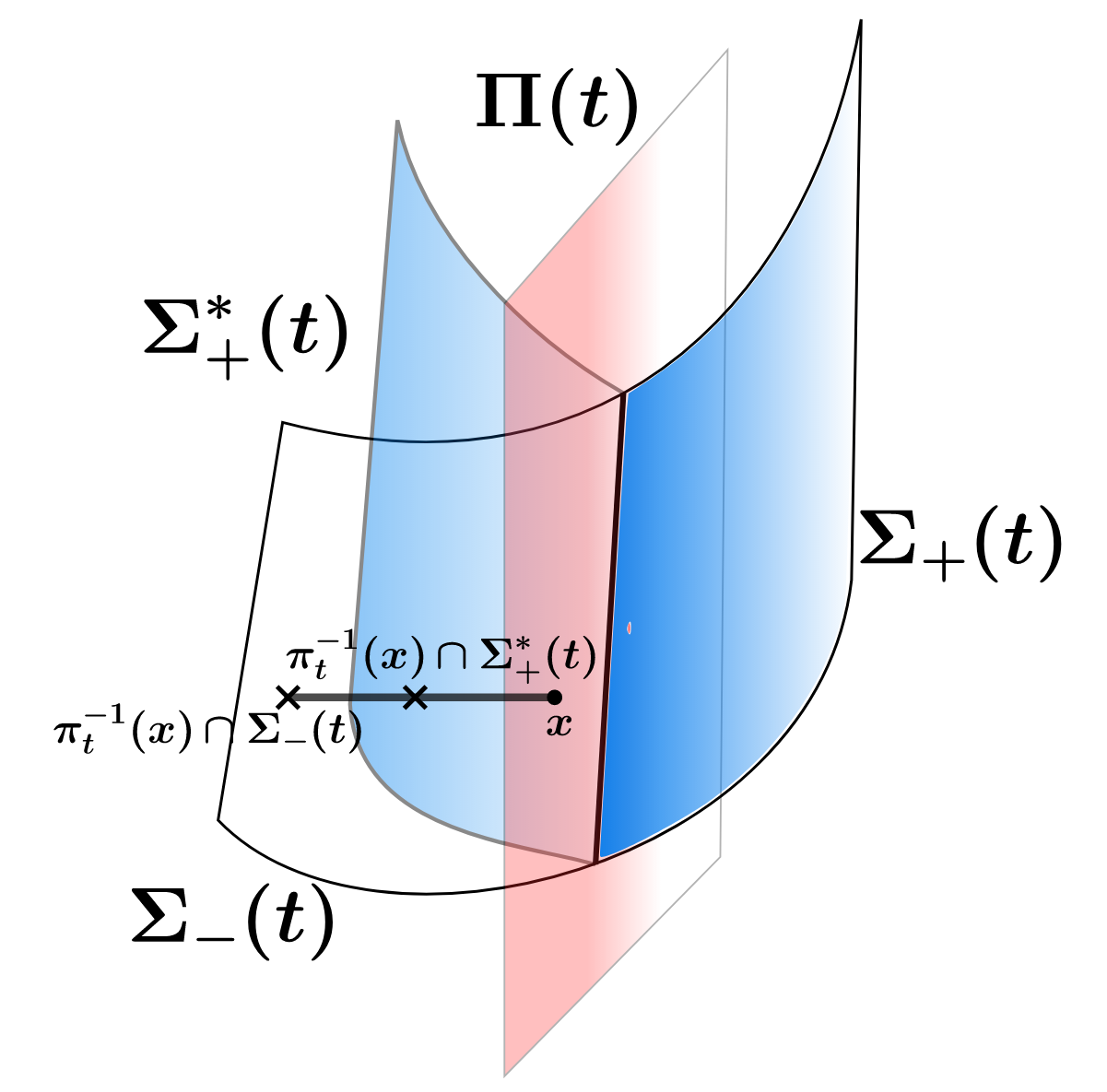}
\end{center}
Firstly, we prove that $\mathcal{A}\neq\varnothing$. From the asymptotic behaviour with respect to $\mathcal{G}^{h}_{+}(0)$, for any $\varepsilon>0$ there exists $t_{0}>0$ such that  $\Sigma_{+}(t)$ can be parametrized by
$$\widetilde{F}:T_{t,h}:=]t,\Lambda_{h}[\times\mathbb{R}\rightarrow\mathbb{R}^{3} \ \  \ \ \widetilde{F}=F+\overline{u}\, N_{F} \ \ \text{ for any } t\geq t_{0},$$
where $F$ is the parametrization of $\mathcal{G}^{h}$ given by the graph of $u$ over $T_{t}$ and $\overline{u}:T_{t,h}\rightarrow\mathbb{R}$ is a smooth function such that
\begin{equation}
\label{estimate1}
\sup_{T_{t,h}}\vert\overline{u}\vert <\varepsilon \ \ , \ \ \sup_{T_{t,h}}\vert D\overline{u}\vert<\varepsilon.
\end{equation}
Consider, 
\begin{equation}
\label{u1}
\widetilde{u}(x_{1})=\langle\widetilde{F},\vec{e}_{3}\rangle=u(x_{1})-\frac{\overline{u}(x_{1},x_{2})}{\sqrt{1+u'(x_{1})^{2}}}
\end{equation}
and define $\widetilde{u}^{*}$ the reflection with respect to $\Pi(t)$ as follows
\begin{equation}
\label{ru1}
\widetilde{u}^{*}(x_{1})=u(2t-x_{1})-\frac{\overline{u}(2t-x_{1},x_{2})}{\sqrt{1+u'(2t-x_{1})^{2}}}.
\end{equation}
From Theorem \ref{t2} and equations \eqref{estimate1},\eqref{u1} and \eqref{ru1}, the following inequality holds
\begin{equation*}
\label{ine}
\widetilde{u}^{*}(x_{1})-\widetilde{u}(x_{1})>u(2t-x_{1})-u(x_{1})-2\varepsilon
\end{equation*}
Choosing a positive constant $a$ (independent of $t_{0}$) and $t_{1}\geq t_{0}$ large enough such that $u'(x_{1})\geq \varepsilon/a$ for $a<t_{1}-x_{1}$, then the following inequalities hold
\begin{equation*}
\widetilde{u}^{*}(x_{1})-\widetilde{u}(x_{1})> 2( u'(x_{1}) a-\varepsilon )>0,
\end{equation*}
 Consequently,  from the inequality above, we get that 
\begin{align*}
\Sigma_{+}^{*}(t+a)&\cap\{(x_{1},x_{2},x_{3})\in\mathbb{R}^{3}: x_{1}\leq t\} \\
&\geq\Sigma_{-}(t+a)\cap\{(x_{1},x_{2},x_{3})\in\mathbb{R}^{3}: x_{1}\leq t\} ,
\end{align*}
for any $t\geq t_{1}$. Moreover, from the fact that $\Sigma_{+}(t)$ is a graph over $\Pi(t)$ we deduce that
\begin{align*}
\Sigma_{+}^{*}(t+a)&\cap\{(x_{1},x_{2},x_{3})\in\mathbb{R}^{3}: t\leq x_{1}\leq t+a\} \\
&\geq\Sigma_{-}(t+a)\cap\{(x_{1},x_{2},x_{3})\in\mathbb{R}^{3}: t\leq x_{1}\leq t+a\}.
\end{align*}
Hence, $[t_{1}+a,\Lambda_{h}[\subset\mathcal{A}$. Therefore, it is clear that if $s\in\mathcal{A}$ then $ ]s,\Lambda_{h}[\subset\mathcal{A}$. Now, suppose by contradiction that $s_{0}=\text{min} (\mathcal{A})>0$. Arguing as in the proof of Claim 3 in \cite{MSHS1},  there exists $\varepsilon_{1}\in ]0,\varepsilon_{0}]$ such that the surface $\Sigma_{+}(s_{0}-\varepsilon_{1})$ can be represented as a graph over the plane $\Pi(0)$. Consequently, $\Sigma_{+}(t)$ is a graph over $\Pi(0)$ for any $t\geq s_{0}-\varepsilon_{1}$. Moreover for such $\varepsilon_{1}$ and using an analogous reasoning that the Claim 3 of \cite{MSHS2}, we can prove that $\Sigma_{+}^{*}(s_{0}-\varepsilon_{1})\geq\Sigma_{-}(s_{0}-\varepsilon_{1})$ and so, $s_{0}-\varepsilon_{1}\in\mathcal{A}$ getting to contradiction.
\end{proof}

\begin{lemma}
\label{l3}
Let $\varphi: ]a,+\infty[\rightarrow ]b,c[$ , $a,b\in\mathbb{R}\cup\{-\infty\}$ , $c\in\mathbb{R}\cup\{+\infty\}$ be a convex strictly increasing smooth function such that $e^{-\varphi}\in L^{1}(]a,+\infty[)$ and $\Sigma$ be a connected $[\varphi,\vec{e}_{3}]$-minimal immersion $\mathcal{C}^{\infty}$-asymptotic to $[\varphi,\vec{e}_{3}]$-catenary cylinder $\mathcal{G}^{h}$, outside a cylinder, for some $h\in]a,+\infty]$. Consider the profile curve $\Gamma=\Sigma\cap\Pi(0)$. If the function $x_{3}\vert_{\Gamma}$ attains its global extremum on $\Gamma$, then $\Sigma$ is a $[\varphi,\vec{e}_{3}]$-catenary cylinder.
\end{lemma}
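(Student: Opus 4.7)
The plan is to prove that the angle function $\eta_2$ vanishes identically on $\Sigma$, so that $\Sigma$ is invariant under translations in $\vec{e}_2$ and thus, by Theorem~3.7 of \cite{MM}, is a $[\varphi,\vec{e}_3]$-catenary cylinder; the asymptotic hypothesis then identifies it with $\mathcal{G}^h$. The first ingredient is the geometric input provided by the extremum. By Lemma \ref{l2}, $\Sigma$ is symmetric under reflection through $\Pi(0)$, so $\vec{e}_1 \in T_{p_0}\Sigma$ at any point $p_0=(0,y_0,z_0)\in\Gamma$. Because $p_0$ is a critical point of $x_3$ along the planar curve $\Gamma \subset \Pi(0)$, the tangent to $\Gamma$ at $p_0$ is parallel to $\vec{e}_2$ and must also lie in $T_{p_0}\Sigma$. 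Hence $T_{p_0}\Sigma=\mathrm{span}(\vec{e}_1,\vec{e}_2)$, $N(p_0)=\pm\vec{e}_3$, and in particular $\eta_2(p_0)=0$. Orienting $\Sigma$ downward so that its Gauss map matches \eqref{normalF} in the asymptotic region, one has $\eta_3<0$ everywhere on the graph, so the quotient $f:=\eta_2/\eta_3=-w_{x_2}$ (with $w$ the graph function) is a well-defined smooth function on $\Sigma$ satisfying the elliptic equation \eqref{LAP2}. Rewriting this as $\widetilde L f=0$, the zeroth-order coefficient equals $-\ddot\varphi\leq 0$ by convexity of $\varphi$, so the strong maximum principle is available for $\widetilde L$.

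The next step is the $\vec{e}_1$-asymptotic decay: Proposition \ref{formula} combined with the $\mathcal{C}^\infty$-smallness of $\bar u$ and its derivatives yields $f\to 0$ as $x_1\to\pm\Lambda_h$. I then show $f\equiv 0$. Suppose for contradiction that $S:=\sup_\Sigma f>0$ (the case $\inf f<0$ is symmetric) and let $p_k\in\Sigma$ be a maximizing sequence. Since $f\to 0$ near $x_1=\pm\Lambda_h$, the coordinate $x_1(p_k)$ stays bounded away from $\pm\Lambda_h$, and since $w$ is uniformly bounded on $|x_1|\leq\Lambda_h-\delta$, $x_3(p_k)$ is also bounded. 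Thus either $p_k$ is bounded in $\mathbb R^3$, or $|x_2(p_k)|\to\infty$ with $x_1(p_k)$ and $x_3(p_k)$ bounded. In the bounded case, properness of $\Sigma$ provides a subsequential limit $p_*\in\Sigma$ with $f(p_*)=S$; the strong maximum principle applied to $\widetilde L f=0$ on the connected manifold $\Sigma$ forces $f\equiv S>0$, contradicting $f\to 0$ as $x_1\to\pm\Lambda_h$.

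In the unbounded case, apply Theorem \ref{conv} to the translates $\Sigma_k:=\Sigma-(0,x_2(p_k),0)$: a subsequence converges smoothly and with multiplicity one to a connected, properly embedded $[\varphi,\vec{e}_3]$-minimal graph $\Sigma_\infty$ that is itself $\mathcal{C}^\infty$-asymptotic to $\mathcal{G}^h$. The translated points $q_k:=(x_1(p_k),0,x_3(p_k))$ are uniformly bounded and sub-converge to some $q_\infty\in\Sigma_\infty$; $C^\infty_{\mathrm{loc}}$-convergence of the Gauss maps yields $f^{\Sigma_\infty}(q_\infty)=\lim f(p_k)=S$, and since $\sup_{\Sigma_k} f^{\Sigma_k}=\sup_\Sigma f=S$ for every $k$, passing to the limit gives $\sup_{\Sigma_\infty} f^{\Sigma_\infty}\leq S$. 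Hence $f^{\Sigma_\infty}$ attains its supremum $S$ at the interior point $q_\infty$, and the strong maximum principle applied to $\widetilde L f^{\Sigma_\infty}=0$ on the connected $\Sigma_\infty$ forces $f^{\Sigma_\infty}\equiv S>0$, contradicting the $\vec{e}_1$-decay of $f^{\Sigma_\infty}$ provided by Proposition \ref{formula} applied to $\Sigma_\infty$. Therefore $\sup_\Sigma f\leq 0$; symmetrically $\inf_\Sigma f\geq 0$, so $\eta_2\equiv 0$ on $\Sigma$, which yields $\vec{e}_2$-invariance, and Theorem~3.7 of \cite{MM} together with the asymptotic hypothesis identifies $\Sigma$ with $\mathcal{G}^h$.

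The main difficulty is the unbounded case: it relies on Theorem \ref{conv} to produce the smooth limit $\Sigma_\infty$ (the convexity of $\varphi$ and the boundedness of $\ddot\varphi/\dot\varphi$ are in force for this), on careful bookkeeping of the $C^\infty_{\mathrm{loc}}$-convergence so that the interior supremum of $f$ passes to the limit, and on the fact that $\Sigma_\infty$ inherits from $\Sigma$ the $\mathcal{C}^\infty$-asymptotic behaviour, which is what lets Proposition \ref{formula} be applied a second time to close the argument. The convexity hypothesis $\ddot\varphi\geq 0$ is also what gives the correct sign of the zeroth-order coefficient in $\widetilde L$, making the strong maximum principle applicable in the first place.
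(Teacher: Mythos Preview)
Your proposal is not a proof of Lemma~\ref{l3}; it is, in effect, an attempt at Theorem~\ref{uni} that bypasses the lemma entirely. Two concrete problems. First, the extremum hypothesis is the distinguishing assumption of the lemma, yet it plays no role in your argument: you derive $\eta_2(p_0)=0$ from it and then never use this fact again; the rest is an asymptotic-decay-plus-maximum-principle argument that, if it worked, would already prove Theorem~\ref{uni}. Second, you invoke assumptions the lemma does not make: that $\Sigma$ is a vertical graph (the statement says ``immersion''), so that $\eta_3<0$ and $f=\eta_2/\eta_3$ is globally defined, and that $\ddot\varphi/\dot\varphi$ is bounded. The bounded-quotient hypothesis is exactly what produces the limits \eqref{limites} in the proof of Theorem~\ref{uni}; without it your claim that $f\to 0$ as $x_1\to\pm\Lambda_h$ is unjustified, since in \eqref{HFF} the factor $\sqrt{1+u'^2}$ blows up there while $\overline{u}_{x_2}$ is merely small.

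The paper's proof of Lemma~\ref{l3} is of a completely different nature: it is a sliding comparison. With $\tau=\max_\Gamma x_3=x_3(p)$ one slides the translated half-cylinder $\mathcal{G}^{\tau,t}_+(0)$ in from $t=-\infty$ and shows (Hopf's interior principle rules out interior contact, and Theorem~\ref{conv} rules out contact escaping to infinity in $\vec e_2$) that no tangency with $\Sigma_+(0)$ can occur for $t<0$. The extremum hypothesis is what makes this work: it forces $\partial\Sigma_+(0)\subset\{x_3\le\tau\}$, so for $t<0$ the boundaries of the two surfaces are disjoint. At $t_0=0$ the surfaces share the boundary point $p$, where by Lemma~\ref{l2} both have horizontal tangent plane, and Hopf's boundary point lemma gives $\Sigma_+(0)=\mathcal{G}^{\tau}_+(0)$. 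This argument needs neither the graph assumption nor the bound on $\ddot\varphi/\dot\varphi$, which is precisely why the lemma can be stated under weaker hypotheses than Theorem~\ref{uni} and then used (via Proposition~\ref{comportamientocurvas}) as an input to that theorem.
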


\begin{proof}
Suppose first that there exists a point $p\in\Gamma$ such that $\tau=\text{max}_{\Gamma}\{x_{3}\}=x_{3}(p)$. Observe that,
$$\partial\Sigma_{+}(0)\subset\{(x_{1},x_{2},x_{3})\in\mathbb{R}^{3}: x_{3}\leq \tau\}.$$
For any $t\in\mathbb{R}$ consider an horizontal translation of $\mathcal{G}^{\tau}_{+}(0)$ given by
$$\mathcal{G}^{\tau,t}_{+}(0)=\{(x_{1},x_{2},u(x_{1}-t))\in\mathbb{R}^{3}: x_{1}\in[t,\Lambda_{\tau}+t[\}$$
where, from the Theorem \ref{t2}, $\Lambda_{\tau}\leq\Lambda_{h}$. Define the following set
$$\mathcal{Q}=\{t\in]-\infty,0[:\mathcal{G}^{\tau,t}_{+}(0)\cap\Sigma_{+}(0)=\varnothing\}.$$
Obviously, $\mathcal{Q}\neq\varnothing$. Moreover, if $t\in\mathcal{Q}$ then $]-\infty,t[\subset\mathcal{Q}$. We claim that
$$t_{0}=\sup\{\mathcal{Q}\}=0.$$
Assume the opposite. If $t_{0}<0$ is not in $\mathcal{Q}$, then there would exist an interior point of contact since, from Theorem \ref{t2}, the boundaries of both surfaces do not touch when $t<0$. Applying the Hopf's maximum principle \cite{Hopf}, we get that $\Sigma=\mathcal{G}^{\tau,t_{0}}$ getting to contradiction with the asymptotic behaviour of $\Sigma$. Hence, we can assume that $t_{0}\in\mathcal{Q}$. In this case, because the distance between the boundaries is positive and $\mathcal{G}^{\tau,t}_{+}(0)$, $\Sigma_{+}(0)$ have different asymptotic behaviour, there exists a sequence $\{p_{n}=(p_{1,n},p_{2,n},p_{3,n})\}_{n\in\mathbb{N}}\subset\Sigma_{+}(0)$ such that $\{p_{3,n}\}$ is bounded, $\{p_{2,n}\}$ is unbounded and
$$\lim_{n\rightarrow+\infty}d(p_{n},\mathcal{G}^{\tau,t_{0}}_{+}(0))=0.$$
From Corollary \ref{conv}, we get that the sequence $\Sigma_{n}=\Sigma-(0,p_{2,n},0)$ converges smoothly, after passing to a subsequence, to a properly embedded connected $[\varphi,\vec{e}_{3}]$-minimal surface $\Sigma_{\infty}$ with the same asymptotic behaviour that $\Sigma$. However, $\Sigma_{\infty}$ and $\mathcal{G}^{\tau,t_{0}}_{+}(0)$ have an interior point of contact getting again to contradiction.
Consequently, we can assume that $t_{0}=0$. Thus, $\mathcal{G}^{\tau}_{+}(0)$ and $\Sigma_{+}(0)$ have a boundary contact at $p$. Observe that the tangent plane at $p$ of both surfaces is horizontal by Lemma \ref{l2} and so, from Hopf's maximum principle in the boundary, these surfaces coincide as we wanted. On the other hand, if there exists $q\in\Gamma$ such that $\sigma=\text{min}_{\Gamma}\{x_{3}\}=x_{3}(q)$, the same reasoning works comparing $\Sigma_{+}(0)$ with $\mathcal{G}^{\sigma,-t}_{-}(0)$ for $t\geq 0$.
\end{proof}
\begin{remark}
\label{noint}
We would like pointing out that this argument is not true when $e^{-\varphi}\notin L^{1}(]a,+\infty[)$ because we can not assure the existence of a first contact point of tangency since the inclination of the asymptotic plane of $\mathcal{G}^{h}$,given by the Theorem \ref{t2}, with respect any horizontal plane is decreasing with respect the initial condition, that is, $lim_{x_{1}\rightarrow+\infty}u'(x_{1})$ is decreasing with respect to $h$.
\end{remark}
\begin{center}
\includegraphics[scale=0.18]{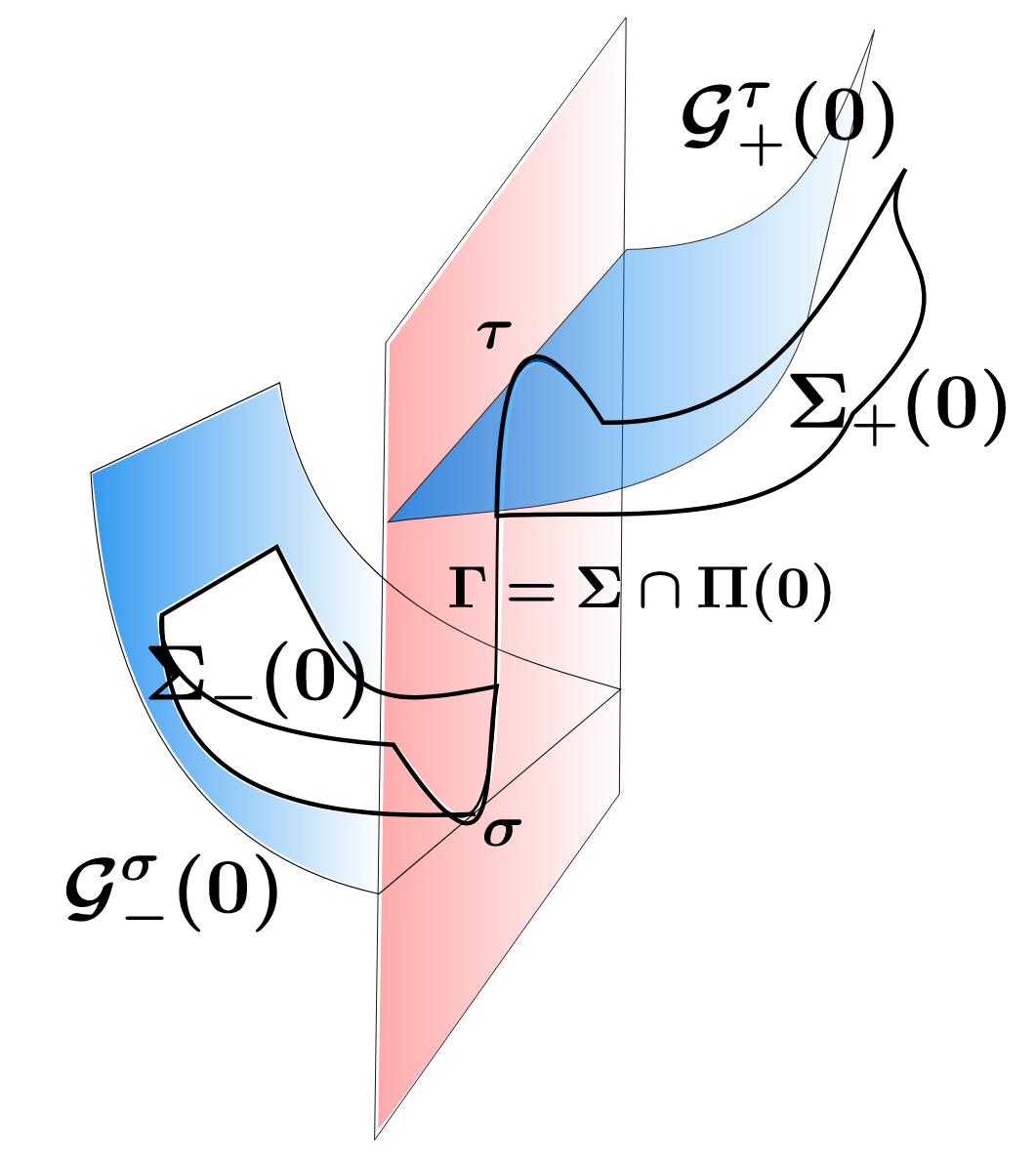}
\end{center}

\begin{proposition}
\label{comportamientocurvas}
Let $\varphi:]a,+\infty[\rightarrow ]b,c[$ , $a,b\in\mathbb{R}\cup\{-\infty\}$ , $c\in\mathbb{R}\cup\{+\infty\}$ be a convex strictly increasing diffeomorphism with $e^{-\varphi}\in L^{1} (]a,+\infty[)$ and $\Sigma$ be a connected $[\varphi,\vec{e}_{3}]$-minimal immersion $\mathcal{C}^{\infty}$-asymptotic to $[\varphi,\vec{e}_{3}]$-catenary cylinder $\mathcal{G}^{h}$, outside a cylinder, for some $h\in ]a,+\infty[$. For any sequence of points $\{(p_{1,n},p_{2,n},p_{3,n})\}$ of $\Sigma$ such that $\{p_{2,n}\}$ diverges and $\{p_{3,n}\}$ is bounded, the sequence  $\{\Sigma_{n}=\Sigma-(0,p_{2,n},0)\}_{n\in\mathbb{N}}$ converge smoothly, after subsequence, to some $[\varphi,\vec{e}_{3}]$-catenary cylinder with the same asymptotic behaviour that $\mathcal{G}^{h}$.
\end{proposition}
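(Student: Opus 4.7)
The plan is to combine the compactness result Theorem \ref{conv} with the symmetry-rigidity Lemma \ref{l3}. First, I would invoke Theorem \ref{conv} on $\{\Sigma_n\}$: each $\Sigma_n$ is a rigid $\vec{e}_2$-translate of $\Sigma$ and thus inherits proper embeddedness, $[\varphi,\vec{e}_3]$-minimality, locally bounded genus, and the same $\mathcal{C}^\infty$-asymptotic behaviour to $\mathcal{G}^h$ outside the $\vec{e}_2$-invariant cylinder $\mathfrak{c}$. A subsequence of $\{\Sigma_n\}$ then converges smoothly with multiplicity one to a properly embedded connected $[\varphi,\vec{e}_3]$-minimal surface $\Sigma_\infty$ sharing the same asymptotic behaviour. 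Moreover, Lemma \ref{l1} gives $|p_{1,n}|<\Lambda_h$ while $\{p_{3,n}\}$ is bounded by hypothesis, so after a further subsequence proper convergence yields a limit point $p_\infty=(p_1^\infty,0,p_3^\infty)\in\Sigma_\infty$ for the translated points $(p_{1,n},0,p_{3,n})\in\Sigma_n$; in particular $\Sigma_\infty$ is nonempty.

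Next I would try to apply Lemma \ref{l3} to $\Sigma_\infty$. Writing $\Gamma_\infty:=\Sigma_\infty\cap\Pi(0)$, the $\mathcal{C}^\infty$-asymptotic behaviour forces $\Gamma_\infty\subset\mathfrak{c}\cap\Pi(0)$, a horizontal strip that is bounded in the $x_3$-direction, so $\tau:=\sup x_3|_{\Gamma_\infty}$ is finite. If $\tau$ is attained on $\Gamma_\infty$, Lemma \ref{l3} directly concludes that $\Sigma_\infty$ is a $[\varphi,\vec{e}_3]$-catenary cylinder with the prescribed asymptotic behaviour. To verify attainment, pick $q_k=(0,y_k,z_k)\in\Gamma_\infty$ with $z_k\to\tau$: if $\{y_k\}$ admits a bounded subsequence, closedness of $\Gamma_\infty$ (from the proper embeddedness of $\Sigma_\infty$) produces a point of $\Gamma_\infty$ at height $\tau$, finishing the proof. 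Otherwise $|y_k|\to\infty$.

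In that remaining subcase I would iterate the compactness step: applying Theorem \ref{conv} to $\{\Sigma_\infty-(0,y_k,0)\}_k$, which again satisfies the hypotheses by the $\vec{e}_2$-invariance of $\mathfrak{c}$ and $\mathcal{G}^h$, yields a smooth subsequential limit $\widetilde\Sigma$ containing the limit point $(0,0,\tau)$ of $q_k-(0,y_k,0)$. Since $\vec{e}_2$-translations preserve $x_3$-heights, any point of $\widetilde\Sigma\cap\Pi(0)$ is a smooth limit of points in $\Gamma_\infty-(0,y_k,0)$ with $x_3\le\tau$, so $\sup x_3|_{\widetilde\Sigma\cap\Pi(0)}=\tau$ and is attained at $(0,0,\tau)$; Lemma \ref{l3} now applies to $\widetilde\Sigma$ and delivers a catenary cylinder. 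A diagonal extraction realizes $\widetilde\Sigma$ as the smooth limit of translates $\Sigma-(0,p_{2,n_j}+y_{k_j},0)$ with $p_{2,n_j}+y_{k_j}$ divergent, and the subsequence of $\{\Sigma_n\}$ required by the statement is recovered by absorbing the shifts $y_{k_j}$ into this divergent sequence of $\vec{e}_2$-translation amounts. I expect the main obstacle to be precisely this bookkeeping, namely verifying that the iterated smooth limit $\widetilde\Sigma$ is literally a subsequential smooth limit of the original $\{\Sigma_n=\Sigma-(0,p_{2,n},0)\}$ rather than of a broader family of $\vec{e}_2$-translates of $\Sigma$; this should follow from the closedness (under smooth convergence on compacta) of the set of subsequential limits of $\{\Sigma_n\}$, combined with the diagonal construction.
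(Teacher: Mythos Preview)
Your overall strategy coincides with the paper's: invoke Theorem~\ref{conv} to extract a smooth subsequential limit $\Sigma_\infty$ with the same asymptotic behaviour, and then feed $\Sigma_\infty$ into Lemma~\ref{l3}. The divergence is only in how the hypothesis of Lemma~\ref{l3} (that $x_3|_{\Gamma_\infty}$ attains a global extremum on $\Gamma_\infty=\Sigma_\infty\cap\Pi(0)$) is secured.

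The paper does \emph{not} iterate. It first passes to a subsequence along which $\{p_{3,n}\}$ is strictly monotone and converges to some $p_{3,\infty}$, then applies Theorem~\ref{conv} to that subsequence, and asserts---tersely, ``taking into account the way in which we have constructed the limit''---that the point $(0,0,p_{3,\infty})\in\Sigma_\infty$ realizes the global maximum (respectively minimum) of $x_3$ on $\Gamma_\infty$. Lemma~\ref{l3} is then applied directly to $\Sigma_\infty$, which is already a subsequential limit of the original $\{\Sigma_n\}$, so no bookkeeping issue arises.

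Your two-step route, by contrast, does have the gap you anticipate, and it is genuine for the statement as written. After the second application of Theorem~\ref{conv} you obtain the catenary cylinder $\widetilde\Sigma$ only as a smooth limit of surfaces $\Sigma-(0,p_{2,n_j}+y_{k_j},0)$; ``absorbing the shifts $y_{k_j}$'' does not return you to the prescribed sequence $\{p_{2,n}\}$, and the closedness of the set of subsequential limits of $\{\Sigma_n\}$ does not help either, since $\widetilde\Sigma$ is produced as a limit of translates of $\Sigma_\infty$, not as a limit of members (or of subsequential limits) of $\{\Sigma_n\}$. The fix aligned with the paper is to show directly that the extremum on $\Gamma_\infty$ is attained, so that Lemma~\ref{l3} applies to $\Sigma_\infty$ itself and the conclusion refers to an honest subsequence of $\{\Sigma_n\}$.
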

\begin{proof}
Up to subsequence, we can assume that $\{p_{3,n}\}$ is strictly monotonous converging to either the supremum or infimum. Now,  from the Corollary \ref{conv}, we know that $\Sigma_{n}$ converge smoothly to a connected properly embedded $[\varphi,\vec{e}_{3}]$-minimal surface $\Sigma_{\infty}$ with the same asymptotic behaviour that $\Sigma$.  Taking into account the way in which we have constructed the limit, we get that either
\begin{align}
&\text{max}_{\Pi(0)\cap\Sigma_{\infty}}\{x_{3}\}=x_{3}\vert_{\Sigma_{\infty}}((0,0, p_{3,\infty}))\label{a1} \text{ or, }\\
&\text{min}_{\Pi(0)\cap\Sigma_{\infty}}\{x_{3}\}=x_{3}\vert_{\Sigma_{\infty}}((0,0,p_{3,\infty}))\label{b1}.
\end{align}
Consequenly, from the Lemma \ref{l3} and \eqref{a1} or \eqref{b1}, we prove that $\Sigma_{\infty}$ coincides with some $[\varphi,\vec{e}_{3}]$-catenary cylinder.
\end{proof}

\noindent{\large PROOF OF THEOREM \ref{uni}}

\begin{proof}
Let $\{a_{n}\}_{n\in\mathbb{N}}, \{ b_{n}\}_{n\in\mathbb{N}}$ be a strictly increasing sequences of positive real numbers.  Consider a compact exhaustion $\{\Lambda_{n}\}_{n\in\mathbb{N}}$ whose boundary consists in the following four curves,
\begin{align*}
&\Lambda_{1,n}=\{(x_{1},x_{2},x_{3})\in\Sigma: x_{1}>0,\ \ -a_{n}\leq x_{2}\leq b_{n}, \ \ x_{3}=n\}, \\
&\Lambda_{2,n}=\{(x_{1},x_{2},x_{3})\in\Sigma: x_{1}<0,\ \ -a_{n}\leq x_{2}\leq b_{n}, \ \ x_{3}=n\}, \\
&\Lambda_{3,n}=\{(x_{1},x_{2},x_{3})\in\Sigma: x_{2}=-a_{n} \ \ x_{3}\leq n\}, \\
&\Lambda_{4,n}=\{(x_{1},x_{2},x_{3})\in\Sigma: x_{2}=b_{n} \ \ x_{3}\leq n\}.
\end{align*}
\begin{center}
\includegraphics[scale=0.16]{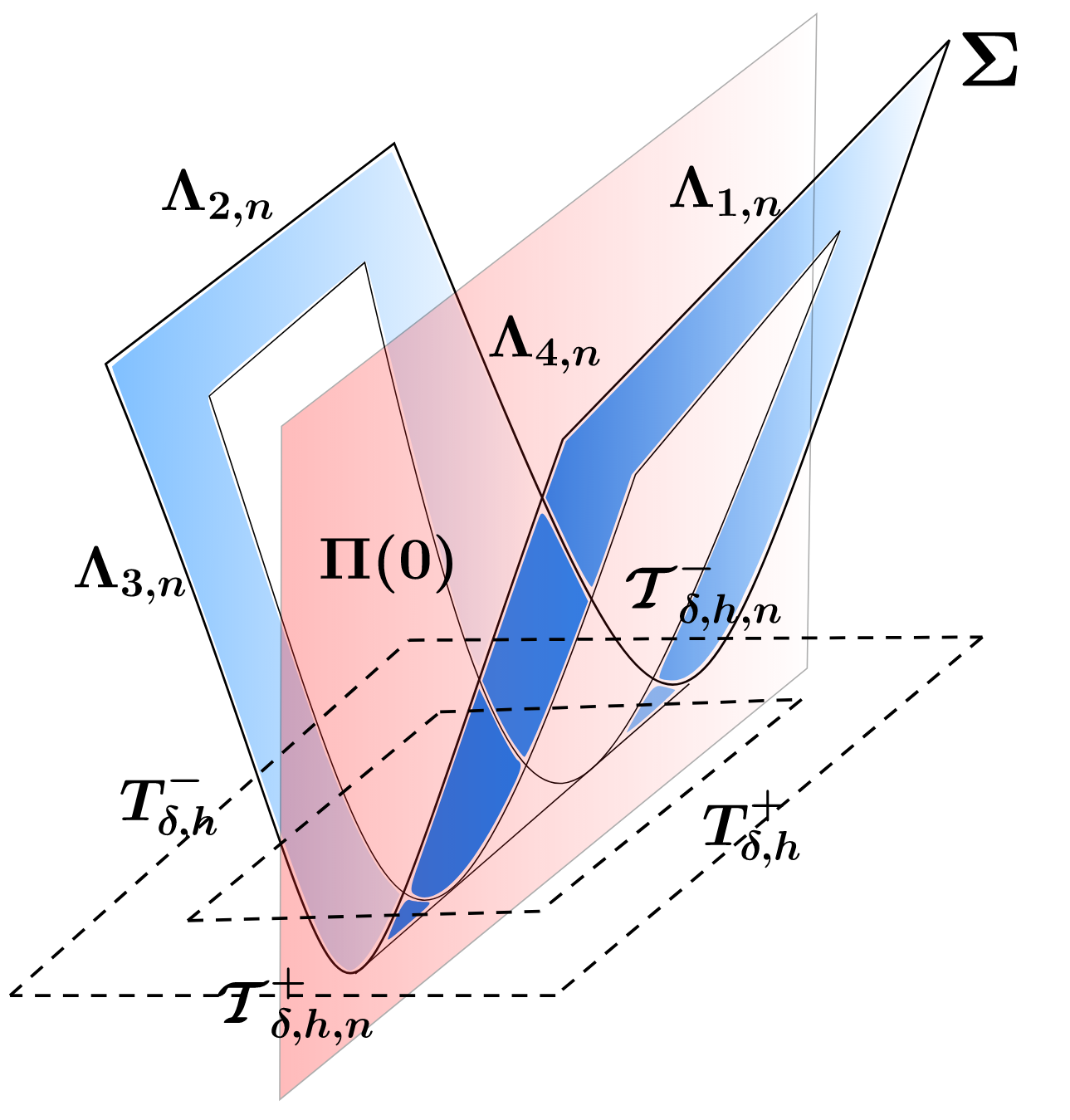}
\end{center}
From the asymptotic behaviour given by the definition \ref{convergenciaG}, fix  $n_{1}$ large enough and consider an arbitrary $n\geq n_{1}$, for any $\varepsilon>0$ there exists  $\delta>0$ (depending only of $\varepsilon$) such that a neighborhood of $\Lambda_{1,n}$ can parametrized as a graph over $\mathcal{G}^{h}_{+}(0)$ by $$\widetilde{F}:T_{\delta,h}^{+}\rightarrow\mathbb{R}^{3} \ \ , \ \  \widetilde{F}=F+\overline{u}\, N_{F},$$
where $F(x_{1},x_{2})=(x_{1},x_{2},u(x_{1}))$ parametrizes $\mathcal{G}^{h}$ on $T_{\delta,h}^{+}$ with $u$ solution of \eqref{htrans}, $\overline{u}:T_{\delta,h}^{+}\rightarrow\mathbb{R}$ is a smooth function such that
$$\sup_{T_{\delta,h}^{+}}\vert\overline{u}\vert <\varepsilon \ \ , \ \ \sup_{T_{\delta,h}^{+}}\vert D^{j}\overline{u}\vert<\varepsilon, \text{ for any } j\in\mathbb{N}.$$
and $N_{F}$ is the unit normal of $\mathcal{G}^{h}$. From the equation \eqref{HFF}, we can write
\begin{equation}
\label{funcionh}
\frac{\eta_{2}}{\eta_{3}}=\overline{u}_{x_{2}}\sqrt{1+u'^{2}}\left(\frac{1+\overline{u}\left(\frac{\dot{\varphi}}{\sqrt{1+u'^{2}}}\right)}{1+\overline{u}_{x_{1}}\frac{u'}{\sqrt{1+u'^{2}}}+\overline{u}\left(\frac{\dot{\varphi}}{\sqrt{1+u'^{2}}}\right)}\right) \ \ \text{on } T_{\delta,h}^{+}.
\end{equation}
Let us examine at first the behaviour of $\eta_{2}/\eta_{3}$ along $\Lambda_{1,n}$. Because for any fixed $x_{2}$, we have that
$\lim_{x_{1}\rightarrow\Lambda_{h}}\vert\overline{u}\vert=\lim_{x_{1}\rightarrow\Lambda_{h}}\vert D\overline{u}\vert=0,$
then
\begin{equation}
\label{cotau}
\vert\overline{u}_{x_{2}}(x_{1},x_{2})\vert=\bigg\vert -\int_{x_{1}}^{\Lambda_{h}}\overline{u}_{x_{1}x_{2}}(s,x_{2})\, d s\bigg\vert\leq (\Lambda_{h}-x_{1})\varepsilon.
\end{equation}
Moreover, from the Theorem \ref{t2}, the L'H\^opital rule and assumptions of Theorem A, the following limits exists
\begin{equation}
\label{limites}
\lim_{x_{1}\rightarrow\Lambda_{h}}\frac{\dot{\varphi}}{\sqrt{1+u'^{2}}}\ \ , \ \ \lim_{x_{1}\rightarrow\Lambda_{h}}(\Lambda_{h}-x_{1})\sqrt{1+u'^{2}(x_{1})}.
\end{equation}
Consequently, from \eqref{cotau} and \eqref{limites}, there exists $n_{2}\geq n_{1}$ large enough such that for any $n\geq n_{2}$, the following estimates holds for the equation \eqref{funcionh},
\begin{equation}
\label{d4}
 \text{sup}_{\Lambda_{1,n}}\vert \eta_{2}/\eta_{3}\vert<\varepsilon.
\end{equation}
Moreover, from the symmetry of \ref{l2}, the previous inequality \eqref{d4} holds in $\Lambda_{2,n}$. On the other hand, from the Proposition \ref{comportamientocurvas} , we can argue analogously with the curves $\Lambda_{3,n}$ and $\Lambda_{4,n}$ because both curves are $\mathcal{C}^{\infty}$-asymptotic to some $\mathcal{G}^{h'}$ such that $\mathcal{G}^{h'}$ has the same asymptotic behaviour that $\mathcal{G}^{h}$, that is, $\Lambda_{h'}=\Lambda_{h}$. Hence, there exists  $n_{3}\geq n_{2}$ large enough such that for any $n\geq n_{3}$ a neighborhood of $\Lambda_{3,n}$ can be parametrized as a graph over $\mathcal{G}^{h'}$ by
$$\widetilde{F}_{n}:\mathcal{T}_{\delta,h,n}^{+}\rightarrow\mathbb{R}^{3} \ \ \ \ \widetilde{F}_{n}=F+\overline{u_{n}}N_{F},$$ 
where $\mathcal{T}_{\delta,h,n}^{+}=]-\Lambda_{h'}+\delta,\Lambda_{h'}-\delta[\times ]m_{1,n},m_{2,n}[\rightarrow\mathbb{R}$ is a smooth function, $\delta>0$ only depends of $n_{3}$, $\{m_{1,n}\}_{n\in\mathbb{N}},\{m_{2,n}\}_{n\in\mathbb{N}}$ are strictly monotonous sequences with $m_{1,n}<m_{2,n}$ and each $\overline{u_{n}}:\mathcal{T}_{\delta,h,n}^{+}\rightarrow\mathbb{R}$ is a smooth function satisfying the following inequalities,
$$\text{sup}_{\mathcal{T}_{\delta,h,n}^{+}}\vert\overline{u_{n}}\vert<\varepsilon, \ \ \text{sup}_{\mathcal{T}_{\delta,h,n}^{+}}\vert D^{j}\overline{u_{n}}\vert<\varepsilon, \ \ \text{for any } j\in\mathbb{N}.$$
Notice that, the existence of these sequence of functions is guaranteed by the convergence given in the Theorem \ref{CW}. In these case, $x_{1}$ is not tending to $\pm\Lambda_{h}$ and from the Theorem \ref{t2}, $u'$ is bounded. Hence, the same argument works as above because the previous limits \eqref{limites} exist for any divergence sequence of points and so, the inequality \eqref{d4} holds over $\Lambda_{3,n}$.  Analogously, taking $n_{4}\geq n_{3}$ large enough and parametrizing a neighborhood of $\Lambda_{4,n}$ as a graph over $\mathcal{G}^{h'}$  by $\overline{v}_{n}:\mathcal{T}_{\delta,h,n}^{-}=]-\Lambda_{h'}+\delta,\Lambda_{h'}-\delta[\times ]-m_{2,n},-m_{1,n}[ \rightarrow\mathbb{R}$ with
$$\text{sup}_{\mathcal{T}_{\delta,h,n}^{-}}\vert\overline{v_{n}}\vert<\varepsilon, \ \ \text{sup}_{\mathcal{T}_{\delta,h,n}^{-}}\vert D^{j}\overline{v_{n}}\vert<\varepsilon \ \ \text{for any } j\in\mathbb{N},$$
and the inequality \eqref{d4} also holds over $\Lambda_{4,n}$. Consequently, we get that the function $\eta_{2}/\eta_{3}$ tends to zero as $p\rightarrow \infty$. In particular, there exists an interior point where the function $\eta_{2}/\eta_{3}$ attains either a local minimum or a local maximum. From the equation \eqref{LAP2} and the convexity of $\varphi$, we deduce that $\eta_{2}$ vanishes everywhere and then, $\Sigma$ is invariant under translations in the direction $\vec{e}_{2}$ as we wanted.
\end{proof}

\begin{remark}
Notice that if $\Lambda_{\lambda}$ is strictly decreasing with respect to inital data $\lambda$, then we can assure that $\Sigma$ coincides with $\mathcal{G}^{h}$.
\end{remark}

\section{Concluding remarks} 
\begin{itemize}
\item As a direct consequence of the proof of Theorem \ref{uni}, we can change the hypothesis of to be graph in the Theorem \ref{uni} by the following conditions. Suppose that $\varphi$ is a strictly increasing convex smooth function with at most a linear growth such that $\dddot{\varphi}\leq 0$  and assume that $\Sigma$ is a connected complete mean convex properly embedded $[\varphi,\vec{e}_{3}]$-minimal immersion with locally bounded genus $\mathcal{C}^{\infty}$-asymptotic to some $[\varphi,\vec{e}_{3}]$-catenary cylinder, outside a cylinder. Notice that, $\dot{\varphi}$ and $\ddot{\varphi}$ are bounded. Thus, $\Sigma$ is closed to the $[\varphi,\vec{e}_{3}]$-catenary cylinder in $C^{2}$-topology. In particular the Gauss curvature of $\Sigma$ is bounded. From the section 4. and the Theorem B of \cite{MMJ}, $\Sigma$ has locally uniformly bounded intrinsic area and $K\geq 0$ everywhere. If we prove that $\eta_{2}=0$ everywhere, then there exists a point where the Gauss curvature vanishes. Consequently, from the Theorems 2.5 and 3.7 of \cite{MM}, $\Sigma$ is flat and it must be coincide with some $[\varphi,\vec{e}_{3}]$-catenary cylinder.

\item On the other hand, an interesting question is whether a complete properly embedded $[\varphi,\vec{e}_{3}]$-minimal surface with locally bounded genus $\mathcal{C}^{\infty}$-asymptotic, outside a cylinder, to some $[\varphi,\vec{e}_{3}]$-catenary cylinder is a vertical graph with $\varphi$ under the conditions of  theorem \ref{uni}. In such case, we  can generalize the result of  F. Mart\'in, J. P\'erez-Garc\'ia, A. Savas-Halilaj and K. Smoczyk \cite{MSHS1}.

\item Finally, from the remark \ref{noint}, it would be interesting to prove the veracity of the Theorem \ref{uni} when $\varphi$ is strictly increasing convex smooth function but $e^{-\varphi}$ is not integrable.
\end{itemize}

\end{document}